\newtheorem{theorem}{Theorem}
\newtheorem{proposition}{Proposition}
\newtheorem{lemma}{Lemma}
\newtheorem{false statement}{False statement}
\theoremstyle{definition}
\newtheorem{claim}{Claim}
\newtheorem{subclaim}{Claim}[claim]
\newtheorem{conjecture}{Conjecture}
\newtheorem{problem}{Problem}
\newcommand{\cl}{\rm cl}
\newcounter{mathitem}
\newenvironment{mathitem}
\begin{document}

\title{\bf\Large Induced subgraphs with large degrees at end-vertices for hamiltonicity of claw-free graphs\thanks{Supported by NSFC (11271300), the Doctorate Foundation of Northwestern Polytechnical University (cx201202 and cx201326) and the project NEXLIZ --
CZ.1.07/2.3.00/30.0038, which is co-financed by the European Social
Fund and the state budget of the Czech Republic. }}

\date{}

\author{Roman \v{C}ada$^a$, Binlong Li$^{a,b}$\thanks{E-mail address: {\tt libinlong@nwpu.edu.cn (B. Li)}}, Bo Ning$^b$\thanks{E-mail address: {\tt bo.ning@tju.edu.cn (B. Ning)}} and
Shenggui Zhang$^{b}$\thanks{E-mail address: {\tt sgzhang@nwpu.edu.cn (S. Zhang).}}\\[2mm]
\small $^a$ Department of Mathematics, NTIS - New Technologies for the Information Society, \\
\small University of West Bohemia, 30614 Pilsen, Czech Republic\\
\small $^b$ Department of Applied Mathematics, School of Science, \\
\small Northwestern Polytechnical University, Xi'an, Shaanxi 710072,
P.R.~China}

\date{}
\maketitle

\begin{abstract}
A graph is called \emph{claw-free} if it contains no
induced subgraph isomorphic to $K_{1,3}$. Matthews and Sumner proved
that a 2-connected claw-free graph $G$ is hamiltonian if every
vertex of it has degree at least $(|V(G)|-2)/3$. At the workshop
C\&C (Novy Smokovec, 1993), Broersma conjectured the degree
condition of this result can be restricted only to end-vertices of
induced copies of $N$ (the graph obtained from a triangle by adding
three disjoint pendant edges). Fujisawa and Yamashita showed that
the degree condition of Matthews and Sumner can be restricted only
to end-vertices of induced copies of $Z_1$ (the graph obtained from
a triangle by adding one pendant edge). Our main result in this
paper is a characterization of all graphs $H$ such that a
2-connected claw-free graph $G$ is hamiltonian if each end-vertex of
every induced copy of $H$ in $G$ has degree at least $|V(G)|/3+1$.
This gives an affirmative solution
of the conjecture of Broersma up to an additive constant.
% the abstract

\medskip
\noindent {\bf Keywords:} induced subgraph; large degree;
end-vertex; claw-free graph; hamiltonian graph

\smallskip
\end{abstract}   

\section{Introduction}

We use Bondy and Murty  \cite{Bondy_Murty} for terminology and
notation not defined here and consider finite simple graphs only.

Let $G$ be a graph. For a vertex $v\in V(G)$ and a subgraph $H$ of
$G$, we use $N_H(v)$ to denote the set, and $d_H(v)$ the number, of
neighbors of $v$ in $H$, respectively. We call $d_H(v)$ the
\emph{degree} of $v$ in $H$. For $x,y\in V(G)$, an
$(x,y)$-\emph{path} is a path connecting $x$ and $y$. If $x,y\in
V(H)$, the \emph{distance} between $x$ and $y$ in $H$, denoted
$d_H(x,y)$, is the length of a shortest $(x,y)$-path in $H$. When no
confusion occurs, we will denote $N_G(v)$, $d_G(v)$ and $d_G(x,y)$
by $N(v)$, $d(v)$ and $d(x,y)$, respectively.

Let $G$ be a graph and $G'$ a subgraph of $G$. If $G'$ contains all
edges $xy\in E(G)$ with $x,y\in V(G')$, then $G'$ is called an
\emph{induced subgraph} of $G$ (or a subgraph \emph{induced by}
$V(G')$). For a given graph $H$, we say that $G$ is $H$-\emph{free}
if $G$ contains no induced copy of $H$. If $G$ is $H$-free, then we
call $H$ a \emph{forbidden subgraph} of $G$. Note that if $H_1$ is
an induced subgraph of a graph $H_2$, then an $H_1$-free graph is
also $H_2$-free.

We first give a fundamental sufficient degree condition for hamiltonicity
of graphs.

\begin{theorem}[Dirac \cite{Dirac}]\label{ThDi}
Let $G$ be a graph on $n\geq 3$ vertices. If every vertex of $G$ has
degree at least $n/2$, then $G$ is hamiltonian.
\end{theorem}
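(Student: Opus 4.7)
The plan is to extract a Hamilton cycle from a longest path in $G$. The three ingredients are connectedness of $G$, a pigeonhole trick at the endpoints of a longest path to close it into a cycle on the same vertex set, and a second use of connectedness to argue that this cycle already spans $V(G)$.

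First I would verify that $G$ is connected. If $G$ had a component on at most $\lfloor n/2 \rfloor$ vertices, then every vertex of that component would have degree strictly less than $n/2$ (using $n\ge 3$), contradicting the hypothesis. Next, let $P = v_1 v_2 \cdots v_k$ be a longest path in $G$. By maximality, every neighbor of $v_1$ and every neighbor of $v_k$ lies on $P$; in particular $k \ge n/2 + 1$.

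The key step is to turn $P$ into a cycle on $V(P)$. Consider the two index sets
\[
 S = \{\, i \in \{1,\ldots,k-1\} : v_1 v_{i+1} \in E(G)\,\}, \qquad
 T = \{\, i \in \{1,\ldots,k-1\} : v_i v_k \in E(G)\,\},
\]
so that $|S| = d(v_1) \ge n/2$ and $|T| = d(v_k) \ge n/2$. Since $S\cup T \subseteq \{1,\ldots,k-1\}$ and $|S|+|T|\ge n \ge k > k-1$, there is some $i \in S\cap T$, yielding the cycle
\[
 C :\; v_1\, v_{i+1}\, v_{i+2}\, \cdots\, v_k\, v_i\, v_{i-1}\, \cdots\, v_2\, v_1
\]
of length $k$ on the vertex set $V(P)$.

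Finally, I would show $V(C)=V(G)$: if some $u \notin V(C)$ existed, the connectedness of $G$ would provide an edge from $u$ to a vertex of $C$, and combining this edge with a suitable arc of $C$ produces a path on $k+1$ vertices, contradicting the maximality of $P$. The only mildly delicate ingredient is the pigeonhole step, and the reason it works is precisely that the endpoints of a \emph{longest} path have all their neighbors on the path, so their degree hypothesis directly bounds $|S|$ and $|T|$ inside $\{1,\ldots,k-1\}$; no other step requires more than maximality of $P$ and connectedness of $G$.
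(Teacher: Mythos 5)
The paper states this result as a classical theorem of Dirac and cites it without giving a proof, so there is no in-paper argument to compare against. Your proposal is the standard and correct longest-path proof: the connectivity check, the crossover/pigeonhole step producing a cycle on $V(P)$, and the maximality contradiction are all sound, and the bound $k\ge n/2+1\ge 3$ even guarantees the constructed cycle is a genuine cycle. The only phrasing to tighten is the last step: connectedness does not give an edge from an \emph{arbitrary} $u\notin V(C)$ to $C$, but it does give an edge between $V(C)$ and its complement (or, alternatively, any $u\notin V(C)$ has a neighbor on $C$ because $d(u)\ge n/2 > |V(G)\setminus V(C)|-1$), and either version completes the contradiction with the maximality of $P$.
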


The graph $K_{1,3}$ is called the \emph{claw}, and its only vertex
of degree 3 is called its \emph{center}. For a given graph $H$, we
call a vertex $v$ of $H$ an \emph{end-vertex} of $H$ if $d_H(v)=1$.
Thus a claw has three end-vertices. In this paper, we use the common term
claw-free graphs for $K_{1,3}$-free graphs.

Hamiltonian properties of claw-free graphs have been well studied by
many graph theorists. The lower bound on the degrees in Dirac's
theorem can be lowered to roughly $n/3$ in the case of (2-connected)
claw-free graphs.

\begin{theorem}[Matthews and Sumner \cite{MatthewsSumner}]\label{ThMaSu}
Let $G$ be a 2-connected claw-free graph on $n$ vertices. If every
vertex of $G$ has degree at least $(n-2)/3$, then $G$ is
hamiltonian.
\end{theorem}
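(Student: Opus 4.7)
The plan is to argue by contradiction. Suppose $G$ is 2-connected, claw-free, every vertex has degree at least $(n-2)/3$, yet $G$ is not hamiltonian. Choose a longest cycle $C$ in $G$, fix an orientation on $C$, and for $x\in V(C)$ write $x^+,x^-$ for the successor and predecessor on $C$. Since $C$ is not a Hamilton cycle, $R:=V(G)\setminus V(C)$ is nonempty; let $H$ be a component of $G[R]$, and set $A:=N_G(V(H))\cap V(C)$. By 2-connectedness, $|A|\geq 2$.

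The structural heart of the proof is to exploit the interplay of claw-freeness with the maximality of $C$. Fix $u\in A$ and a neighbor $w\in V(H)$ of $u$. The four vertices $\{u;u^-,u^+,w\}$ cannot induce a claw, so one of $u^-u^+,\ u^-w,\ u^+w$ is an edge. But $u^-w$ or $u^+w$ would permit inserting a segment through $V(H)$ into $C$ and produce a cycle longer than $C$, a contradiction. Hence $u^-u^+\in E(G)$ for every $u\in A$. A parallel rerouting argument (via an internal $(u,v)$-path through $H$ together with a putative edge $u^+v^+$, or a putative edge from $A^+$ to $V(H)$) shows that $A^+:=\{u^+:u\in A\}$ is an independent set of $G$, and that no vertex of $A^+$ has a neighbor in $V(H)$. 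Consequently, for any $w_0\in V(H)$ the set $I:=A^+\cup\{w_0\}$ is independent, of size $|A|+1\geq 3$.

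The second half is a degree count on $I$. Each $z\in I$ contributes at least $(n-2)/3$ to $\sum_{z\in I}d(z)$. On the other hand, claw-freeness severely restricts how many common neighbors any pair $z_1,z_2\in I$ can share, because a common neighbor $y$ together with $z_1,z_2$ and any third non-neighbor of $z_i$ among $N(y)$ risks creating a claw at $y$; combining this with $I\cap N(z)=\emptyset$ for $z\in I$, one bounds $\sum_{z\in I}d(z)\leq n-|I|+O(|I|)$ and derives $n<n$ once $|I|\geq 3$, a contradiction. The $-2$ slack in the bound $(n-2)/3$ is precisely what lets the overlap terms be absorbed.

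The main obstacle will be the rerouting step used to conclude that $A^+$ is independent and disjoint in neighborhoods from $V(H)$. Each hypothetical improvement of $C$ must be verified case by case: when $|V(H)|=1$, when two elements of $A$ are consecutive on $C$, when $|A|=2$, and when a vertex of $A$ has several neighbors in $V(H)$. These degenerate configurations are exactly where the constant $-2$ in the degree bound is used and cannot be avoided by a uniform estimate; I expect to handle them by a short separate argument before invoking the global edge count on $I$.
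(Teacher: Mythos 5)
The paper does not prove this theorem; it is quoted from Matthews and Sumner \cite{MatthewsSumner}, so your proposal can only be judged on its own merits. The first, structural half is fine and standard: claw-freeness at $u\in A$ forces $u^-u^+\in E(G)$, and the usual rerouting arguments make $A^+\cup\{w_0\}$ independent with no $A^+$--$V(H)$ edges. The genuine gap is in the second half, the degree count on $I$. You assert that claw-freeness ``severely restricts'' the number of common neighbors of a pair $z_1,z_2\in I$ and hence that $\sum_{z\in I}d(z)\leq n-|I|+O(|I|)$. This is not true: in a claw-free graph two nonadjacent vertices can have arbitrarily many common neighbors (attach two nonadjacent vertices to all of a large clique $K_m$; the result is claw-free and the pair has $m$ common neighbors --- indeed the graph of Fig.~2 of this paper has exactly this feature). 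What claw-freeness does give is that no vertex is adjacent to all three members of a $3$-element independent set, i.e.\ $\sum_{z\in I}d(z)\leq 2(n-|I|)$; with $|I|=3$ and $d(z)\geq (n-2)/3$ this reads $n-2\leq 2n-6$, which is no contradiction at all. (By contrast, the bound of ``at most two common neighbors'' used in Claim~1 of this paper applies to \emph{dissociated} vertices, i.e.\ vertices nonadjacent in the Ryj\'a\v{c}ek closure --- a far stronger hypothesis than mere nonadjacency.)

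To close the argument you would have to prove that each vertex of $G$ is adjacent to at most one member of $I$ up to a bounded number of exceptions, and that is exactly the hard content of the Matthews--Sumner theorem: it requires extensive case analysis of how a putative common neighbor of $u^+$ and $v^+$ (on or off $C$) could be used to reroute and lengthen $C$, together with lower bounds on the lengths of the segments of $C$ between consecutive vertices of $A$. Deferring this to ``a short separate argument'' before the global count is not a plan but a placeholder for the entire proof; as written, the proposal does not establish the theorem.
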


Forbidden subgraph conditions for hamiltonicity of graphs have also
received much attention. As $K_2$-free graphs are precisely the
edgeless graphs, it is natural to assume that, throughout this
paper, all forbidden subgraphs under consideration will have at
least three vertices. We also note that every connected $P_3$-free
graph is a complete graph, and thus it is trivially hamiltonian if it
has at least 3 vertices. It is in fact easy to show that $P_3$ is
the only connected graph $R$ such that every 2-connected $R$-free
graph is hamiltonian.

Bedrossian \cite{Bedrossian} characterized all the pairs of
forbidden subgraphs for hamiltonicity, excluding $P_3$.

\begin{theorem}[Bedrossian \cite{Bedrossian}]\label{ThBe}
Let $R$ and $S$ be connected graphs with $R,S\neq P_{3}$ and let $G$
be a 2-connected graph. Then $G$ being $R$-free and $S$-free implies
$G$ is hamiltonian if and only if (up to symmetry) $R=K_{1,3}$ and
$S=P_4,P_5,P_6,C_3,Z_1,Z_2,B,N$ or $W$ (see Fig.~1).
\end{theorem}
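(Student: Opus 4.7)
\medskip
\noindent\textbf{Proof proposal.}
Since the statement is a characterization, I would split the argument into a necessity direction (showing any pair $\{R,S\}$ forcing hamiltonicity among $2$-connected graphs must be on the list) and a sufficiency direction (verifying each of the listed pairs).

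For necessity, my plan is to exhibit a finite library of $2$-connected non-hamiltonian graphs---\emph{blockers}---and exploit the fact that for each blocker $F$, either $R$ or $S$ must occur as an induced subgraph of $F$. Classical blockers suffice: $K_{2,3}$, the complete tripartite graph $K_{1,1,3}$, the Petersen graph, and small theta-graphs $\Theta_{i,j,k}$. Since the only connected induced subgraphs of $K_{2,3}$ on at least three vertices other than $P_{3}$ are $P_{4}$, $K_{1,3}$, and $K_{2,3}$ itself, one of $R,S$ lies in $\{P_{4}, K_{1,3}, K_{2,3}\}$; the other blockers eliminate $P_{4}$ and $K_{2,3}$ and force (up to symmetry) $R=K_{1,3}$. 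The remaining blockers then constrain $S$ to be an induced subgraph of each claw-containing blocker, and intersecting these constraints produces exactly the list $\{P_{4}, P_{5}, P_{6}, C_{3}, Z_{1}, Z_{2}, B, N, W\}$.

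For sufficiency, I would verify each pair separately. The easy cases are $(K_{1,3}, P_{4})$ (a $2$-connected $P_{4}$-free graph is a cograph, hamiltonian via its join decomposition) and $(K_{1,3}, C_{3})$ (a claw-free triangle-free graph has maximum degree at most $2$, so a $2$-connected such graph is a cycle). The pair $(K_{1,3}, Z_{1})$ reduces to $(K_{1,3}, C_{3})$ after separately handling the triangles that a $Z_{1}$-free graph may contain. For the remaining pairs I would invoke Ryj\'a\v{c}ek's closure, which preserves $2$-connectivity, $K_{1,3}$-freeness and hamiltonicity and produces the line graph $L(H)$ of some triangle-free graph $H$. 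Forbidding $S \in \{P_{5}, P_{6}, Z_{2}, B, N, W\}$ in $L(H)$ translates through the line-graph correspondence into a concrete structural constraint on $H$ (bounded diameter, controlled degree sequence, forbidden small configurations), and from such a constraint one constructs a dominating closed trail in $H$, which lifts to a Hamilton cycle in $L(H)$ and hence in $G$.

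The main obstacle will be the sufficiency direction for the ``permissive'' companions $S \in \{P_{6}, N, W\}$: here the forbidden structure is weak enough that essentially the full complexity of the claw-free class is present, so the structural arguments on the triangle-free preimage $H$ are delicate and each companion $S$ requires its own separate combinatorial case analysis. A secondary difficulty on the necessity side is confirming that the blocker library is \emph{complete}, i.e.\ that no graph $S$ outside the displayed list escapes detection; this is handled by a finite check showing that every such candidate appears as an induced subgraph of a blocker already in the library.
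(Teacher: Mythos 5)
This statement is quoted from Bedrossian's Ph.D.\ thesis and the paper gives no proof of it, so there is nothing internal to compare your proposal against; I can only assess the proposal on its own terms. As a program it is reasonable---the necessity direction via a finite library of $2$-connected non-hamiltonian blockers is the standard argument, and the sufficiency direction via Ryj\'a\v{c}ek's closure and line-graph preimages is essentially the route of Brousek, Ryj\'a\v{c}ek and Favaron's later reproof (the original 1991 proof predates the closure). But the sketch contains two concrete errors and defers all of the real work.

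First, $P_4$ is \emph{not} an induced subgraph of $K_{2,3}$: any four vertices of $K_{2,3}$ split $2{+}2$ or $1{+}3$ across the bipartition and induce $C_4$ or $K_{1,3}$ respectively, so the connected induced subgraphs on at least three vertices other than $P_3$ are $K_{1,3}$, $C_4$ and $K_{2,3}$. Your first deduction (``one of $R,S$ lies in $\{P_4,K_{1,3},K_{2,3}\}$'') is therefore false as stated, and the subsequent elimination step has to be redone with $C_4$ in place of $P_4$. Second, your ``easy case'' $(K_{1,3},P_4)$ is justified by the claim that a $2$-connected $P_4$-free graph is hamiltonian via its cograph join decomposition; this is false---$K_{2,3}$ itself is a $2$-connected $P_4$-free cograph that is not hamiltonian. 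The claw-freeness hypothesis cannot be discarded there. Finally, for the pairs that carry all the difficulty ($S=P_6,N,W$), you correctly identify where the hard work lies but supply no argument: ``a concrete structural constraint on $H$'' from which ``one constructs a dominating closed trail'' is a description of the goal, not a proof, and you would also need to verify (or cite) that $S$-freeness is stable under the closure for each such $S$, which is a nontrivial theorem in its own right (the paper cites exactly such a stability result for $N$ as Theorem~\ref{ThBrRyFa}). As it stands the proposal is an outline of a known proof strategy rather than a proof.
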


\begin{center}
\begin{picture}(360,200)

\thicklines

\put(5,140){\multiput(20,30)(50,0){5}{\put(0,0){\circle*{6}}}
\put(20,30){\line(1,0){100}} \put(170,30){\line(1,0){50}}
\qbezier[4](120,30)(145,30)(170,30) \put(20,35){$v_1$}
\put(70,35){$v_2$} \put(120,35){$v_3$} \put(170,35){$v_{i-1}$}
\put(220,35){$v_i$} \put(115,10){$P_i$}}

\put(265,130){\put(20,30){\circle*{6}} \put(70,30){\circle*{6}}
\put(45,55){\circle*{6}} \put(20,30){\line(1,0){50}}
\put(20,30){\line(1,1){25}} \put(70,30){\line(-1,1){25}}
\put(40,10){$C_3$}}

\put(0,0){\put(20,30){\circle*{6}} \put(70,30){\circle*{6}}
\multiput(45,55)(0,25){4}{\put(0,0){\circle*{6}}}
\put(20,30){\line(1,0){50}} \put(20,30){\line(1,1){25}}
\put(70,30){\line(-1,1){25}} \put(45,55){\line(0,1){25}}
\put(45,105){\line(0,1){25}} \qbezier[4](45,80)(45,92.5)(45,105)
\put(50,80){$v_1$} \put(50,105){$v_{i-1}$} \put(50,130){$v_i$}
\put(40,10){$Z_i$}}

\put(90,0){\put(45,40){\circle*{6}} \put(45,40){\line(-1,1){25}}
\put(45,40){\line(1,1){25}} \put(20,65){\line(1,0){50}}
\multiput(20,65)(50,0){2}{\multiput(0,0)(0,30){2}{\put(0,0){\circle*{6}}}
\put(0,0){\line(0,1){30}}} \put(25,10){$B$ (Bull)}}

\put(180,0){\multiput(20,30)(50,0){2}{\multiput(0,0)(0,30){2}{\put(0,0){\circle*{6}}}
\put(0,0){\line(0,1){30}}}
\multiput(45,85)(0,30){2}{\put(0,0){\circle*{6}}}
\put(45,85){\line(0,1){30}} \put(20,60){\line(1,0){50}}
\put(20,60){\line(1,1){25}} \put(70,60){\line(-1,1){25}}
\put(25,10){$N$ (Net)}}

\put(270,0){\put(45,30){\circle*{6}} \put(20,55){\line(1,0){50}}
\put(45,30){\line(1,1){25}} \put(45,30){\line(-1,1){25}}
\multiput(20,55)(0,30){2}{\put(0,0){\circle*{6}}}
\multiput(70,55)(0,30){3}{\put(0,0){\circle*{6}}}
\put(20,55){\line(0,1){30}} \put(70,55){\line(0,1){60}}
\put(10,10){$W$ (Wounded)}}

\end{picture}

\small Fig. 1. Graphs $P_i,C_3,Z_i,B,N$ and $W$.
\end{center}

Note here that the claw is always one of the forbidden subgraphs. Also recall that a $P_4$-free graph is $P_5$-free, etc., so
the relevant graphs for $S$ (in Theorem \ref{ThBe}) are in fact
$P_6$, $N$ and $W$. All the other listed graphs are induced
subgraphs of $P_6$, $N$ or $W$.

At the workshop Cycles and Colourings 93 (Slovakia), Broersma
\cite{Broersma} proposed the following conjecture.

\begin{conjecture}[Broersma \cite{Broersma}]\label{CoBr}
Let $G$ be a 2-connected claw-free graph on $n$ vertices. If every
vertex of $G$ which is an end-vertex of an induced copy of $N$ in
$G$, has degree at least $(n-2)/3$, then $G$ is hamiltonian.
\end{conjecture}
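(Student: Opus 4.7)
The plan is to argue by contradiction. Suppose $G$ is a 2-connected claw-free graph on $n$ vertices such that every end-vertex of every induced copy of $N$ in $G$ has degree at least $(n-2)/3$, yet $G$ is not hamiltonian. By Theorem~\ref{ThMaSu}, $G$ must contain some vertex $v_0$ with $d(v_0)<(n-2)/3$, and the hypothesis then says $v_0$ is never an end-vertex of any induced $N$. The strategy will be to derive a contradiction by producing, from the non-hamiltonicity, an induced copy of $N$ in which $v_0$ (or another low-degree vertex) is forced into an end-position.

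First I would pass to Ryj\'a\v{c}ek's closure $\mathrm{cl}(G)$, which preserves 2-connectedness, claw-freeness, the set of vertices, their degrees, and the circumference, and has the property that $\mathrm{cl}(G)=L(H)$ for some graph $H$. Hamiltonicity of $L(H)$ is equivalent to $H$ admitting a dominating closed trail. The next step is a translation dictionary: an induced copy of $N$ in $L(H)$ corresponds to a specific configuration of six edges in $H$ built on a short path, and the pendant (degree-one) vertices of $N$ correspond to three pairwise non-adjacent edges attached to this path. With this dictionary, the hypothesis becomes a condition on edge-degrees of certain edges of $H$.

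Next, take a longest cycle $C$ in $G$, and pick $u \notin V(C)$. Using 2-connectivity, choose two internally disjoint $u$--$C$ paths meeting $C$ at attachment points $x_1,x_2$, and let $x_i^+,x_i^-$ denote their successors and predecessors on $C$. Standard non-extendability arguments combined with claw-freeness imply that $x_i^+, x_i^-$ are non-adjacent to $u$, and that certain triples among $\{x_i,x_i^+,x_i^-,u,\dots\}$ induce triangles with pendant neighbors outside. I would use these relations to exhibit an induced copy of $N$ whose triangle lies on a short arc of $C$ and whose three pendants include $u$ together with vertices of the form $x_i^\pm$ (or shifts thereof). By the hypothesis, each of these three pendants has degree at least $(n-2)/3$. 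A standard cycle-extension counting then forces $|V(C)|+$ (cardinalities of small neighborhoods) to exceed $n$, contradicting the existence of $u \notin V(C)$.

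The main obstacle will be the precise combinatorial construction of the induced $N$: one must ensure that in every claw-free non-hamiltonian configuration the triangle of $N$ can be placed so that all three pendants lie in positions whose degrees are simultaneously controlled, with the counting sharp enough to yield exactly the bound $(n-2)/3$ rather than the slightly looser $n/3+1$ achieved in the paper's main theorem. I expect a case analysis according to the local structure of $C$ near $x_1$ and $x_2$ (whether $x_1^+=x_2$, whether $u$ has additional neighbors on $C$, whether two candidate pendants coincide, etc.), each case producing its own induced $N$. Closing the gap between $n/3+1$ and $(n-2)/3$ is precisely the point where the current approach appears to lose a small additive constant, which is consistent with the paper's stated result.
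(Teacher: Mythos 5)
The statement you were asked to prove is Broersma's conjecture, and the paper does \emph{not} prove it: it explicitly states ``This conjecture is still open'' and only establishes the weaker Theorem~\ref{ThIf}/\ref{ThIffi}, where the degree threshold is $n/3+1$ rather than $(n-2)/3$. Your proposal does not close that gap either --- indeed your last paragraph concedes that the approach ``appears to lose a small additive constant.'' A proof attempt that ends by admitting it only reaches the weaker bound is not a proof of the conjecture; the entire difficulty of the statement lives in that additive constant, and no idea in your outline addresses it.

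Beyond that, the central construction is missing. The step ``I would use these relations to exhibit an induced copy of $N$ whose triangle lies on a short arc of $C$ and whose three pendants include $u$ together with vertices of the form $x_i^{\pm}$'' is precisely the hard part, and it is not carried out. One must certify that the three candidate pendants are pairwise non-adjacent and non-adjacent to the wrong triangle vertices; in a claw-free graph $x_i^{+}$ and $x_i^{-}$ are typically forced to be adjacent (otherwise $\{x_i,x_i^{+},x_i^{-},u\}$ tends to induce a claw), so they cannot casually serve as distinct pendants. The paper avoids longest-cycle surgery entirely: it invokes Brousek's theorem (Theorem~\ref{ThBr}) to find an induced subgraph from the family $\mathcal{P}$ in $\mathrm{cl}(G)$, then uses the region machinery (Lemmas~\ref{LeRegion1}, \ref{LeRegion}) and a neighborhood-union count over three pairwise dissociated vertices (Claim~\ref{ClHefty}) to locate induced copies of $N$ back in $G$ itself. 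Note also a technical slip in your setup: the closure does not ``preserve degrees'' (it can only increase them), and, more importantly, the hypothesis concerns end-vertices of induced copies of $N$ in $G$, not in $\mathrm{cl}(G)$; an induced $N$ in the closure need not be induced in $G$, which is why the paper needs Theorem~\ref{ThBrRyFa} and the induced paths $\varPi[\cdot]$ to transfer the degree hypothesis. As written, your proposal is a research plan for the open problem, not a proof, and its concrete claims would at best reprove the paper's Theorem~\ref{ThIf} for $H=N$.
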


This conjecture is still open. Fujisawa and Yamashita
\cite{FujisawaYamashita} obtained a similar result as follows.

\begin{theorem}[Fujisawa and Yamashita \cite{FujisawaYamashita}]\label{ThFuYa}
Let $G$ be a 2-connected claw-free graph on $n$ vertices. If every
vertex which is an end-vertex of an induced copy of $Z_1$ in $G$ has
degree at least $(n-2)/3$, then $G$ is hamiltonian.
\end{theorem}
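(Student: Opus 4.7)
The plan is to argue by contradiction: suppose $G$ is a 2-connected claw-free graph on $n$ vertices satisfying the stated end-vertex hypothesis but not hamiltonian. Choose a longest cycle $C$ of $G$ with a fixed orientation, and let $H$ be a component of $G - V(C)$ (nonempty by assumption). The crucial observation is that every vertex off $C$ is an end-vertex of some induced $Z_1$: indeed, for any $x \in V(H)$ and any neighbor $u$ of $x$ on $C$, the longest-cycle choice gives $xu^-, xu^+ \notin E(G)$ (where $u^-, u^+$ are the cyclic predecessor and successor of $u$ on $C$), and claw-freeness at $u$ applied to $\{u^-,u^+,x\}$ forces $u^-u^+ \in E(G)$; thus $\{u^-,u,u^+,x\}$ induces a $Z_1$ with $x$ as pendant, and therefore $d(x) \geq (n-2)/3$.

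With this bound, I would split $d(x) = d_H(x) + d_C(x)$. Claw-freeness bounds the independence number of $N(x)$ by $2$, controlling $d_H(x)$ in terms of $|V(H)|$, and a standard insertion argument around the longest cycle shows that no two neighbors of $x$ on $C$ are cyclically consecutive, so $|V(C)| \geq 2 d_C(x)$. Combined with $d(x)\geq (n-2)/3$, this forces either a large component $H$ or a long cycle $C$. Next I would invoke 2-connectivity to obtain (at least) two attachment arcs of $H$ on $C$, and consider a second off-cycle vertex, or a second attachment of the same $x$: each such attachment produces further induced $Z_1$'s with end-vertices that must also have degree at least $(n-2)/3$, so a whole cluster of high-degree vertices appears.

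The argument is then completed by a counting/packing comparison. Summing the degree lower bounds on the off-cycle vertices and on the end-vertices of all manufactured $Z_1$'s, and contrasting this sum with the partition $|V(C)| + |V(G) \setminus V(C)| = n$, yields the contradiction, in the spirit of the Matthews-Sumner proof but with the hypothesis localized to $Z_1$-end-vertices; alternatively, a cycle-rotation-and-extension step following the Bondy-Chv\'atal insertion principle should directly produce a cycle longer than $C$ through $x$ and its high-degree neighbors, contradicting the choice of $C$.

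The main obstacle, as is typical for such weakened-hypothesis theorems, is to verify that the candidate $Z_1$'s are genuinely \emph{induced}: in each construction one must rule out chords such as $xu^{--}$, $u^-u^{++}$, or long-range edges from $x$ to distant cycle vertices that would collapse the intended $Z_1$ into a larger induced graph (such as $B$ or $W$) and hence fail to invoke the hypothesis. A secondary subtlety is that Ryj\'a\v{c}ek's closure operation does not preserve the $Z_1$-end-vertex degree condition monotonically (new end-vertices can appear after closing), so the argument is best carried out directly in $G$, making the local case analysis around the attachments of $H$ to $C$ — in particular distinguishing between $|V(H)|=1$ and $|V(H)|\geq 2$, and between single and multiple off-cycle components — the most delicate part of the proof.
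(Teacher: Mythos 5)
This statement is Theorem~\ref{ThFuYa}, which the paper quotes from Fujisawa and Yamashita \cite{FujisawaYamashita} without proof, so there is no in-paper argument to compare against; I can only assess your proposal on its own terms. Your opening step is correct and is indeed the natural entry point: for a longest cycle $C$ and $x$ in a component $H$ of $G-V(C)$ with a neighbour $u$ on $C$, maximality gives $xu^-,xu^+\notin E(G)$, claw-freeness at $u$ forces $u^-u^+\in E(G)$, and $\{u^-,u,u^+,x\}$ induces a $Z_1$ with $x$ as end-vertex, so $d(x)\ge (n-2)/3$.

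Beyond that point, however, there is a genuine gap: the degree bound on the off-cycle vertices alone cannot yield a contradiction, and you never actually produce the other high-degree vertices your counting step needs. With $d_C(x)\le |V(C)|/2$ and $d_H(x)\le |V(H)|-1$, the inequality $(n-2)/3\le d(x)$ is easily satisfiable (e.g.\ $|V(C)|=2n/3$, $|V(H)|=n/3$), so a single hefty vertex gives nothing. The Matthews--Sumner-type contradiction requires three pairwise nonadjacent vertices with controlled common neighbourhoods --- typically $x$ together with the successors $u_1^+,u_2^+$ of two attachment vertices --- and the entire difficulty of the Fujisawa--Yamashita theorem is to show that those \emph{cycle} vertices are also end-vertices of induced copies of $Z_1$ (or to replace them by vertices that are), since the hypothesis gives no degree information about them otherwise; a vertex such as $u_1^+$ may well fail to be the end-vertex of any induced $Z_1$. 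Your proposal defers exactly this step to ``each such attachment produces further induced $Z_1$'s'' and to an unspecified ``counting/packing comparison'' or ``rotation-and-extension step,'' and you yourself flag the inducedness verification as an unresolved obstacle. As written, the argument establishes only the first, easy lemma of a proof whose substance lies entirely in the part you have not carried out.
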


Let $G$ be a graph on $n$ vertices and $H$ a given graph. We say
that $G$ satisfies $\varPhi(H,k)$ if for every vertex $v$ which is
an end-vertex of an induced copy of $H$ in $G$, $d(v)\geq(n+k)/3$.

In any connected graph, a vertex which is not an end-vertex of an
induced $P_3$ will be adjacent to all other vertices. Thus a graph
satisfying $\varPhi(P_3,-2)$ implies that every vertex of it has
degree at least $(n-2)/3$. By Theorem \ref{ThMaSu}, such a graph is
hamiltonian if it is 2-connected and claw-free. Also note that
Theorem \ref{ThFuYa} implies that every 2-connected claw-free graph
satisfying $\varPhi(Z_1,-2)$ is hamiltonian. Motivated by Conjecture
\ref{CoBr} and Theorem \ref{ThFuYa}, we consider in this paper, the following question:
For which graphs $H$, every 2-connected claw-free graph satisfying
$\varPhi(H,-2)$ is hamiltonian?

First, for a given connected graph $H$, note that if a graph is
$H$-free, then it naturally satisfies $\varPhi(H,-2)$. To guarantee
a 2-connected claw-free graph satisfying $\varPhi(H,-2)$ is
hamiltonian, by Theorem \ref{ThBe}, we can get that $H$ must be one
of the graphs in $\{P_3,P_4,P_5,P_6,\break C_3,Z_1,Z_2,B,N,W\}$ (to
avoid the discussion of trivial cases, we assume that $H$ has at
least three vertices). Note that $C_3$ has no end-vertex, and every
graph satisfies $\varPhi(C_3,-2)$ naturally. Since not every
2-connected claw-free graph is hamiltonian, $C_3$ does not meet our
result. Another counterexample is $Z_2$. The graph in Fig. 2 is
2-connected claw-free and satisfies $\varPhi(Z_2,-2)$ but it is not
hamiltonian. Thus we have the following result.

\begin{center}
\begin{picture}(140,120)
\thicklines
\multiput(30,60)(80,0){2}{\thinlines\put(0,0){\oval(20,100)}
\put(-5,15){$K_k$}}
\multiput(30,20)(0,40){3}{\multiput(0,0)(40,0){3}{\circle*{4}}
\put(0,0){\line(1,0){80}}}
\end{picture}

\small Fig. 2. A graph satisfying $\varPhi(Z_2,-2)$.
\end{center}

\begin{proposition}\label{PrFi}
Let $H$ be a connected graph on at least 3 vertices and let $G$ be a
2-connected claw-free graph. If $G$ satisfying $\varPhi(H,-2)$
implies $G$ is hamiltonian, then $H=P_3,P_4,P_5,P_6,Z_1,$ $B,N$ or
$W$.
\end{proposition}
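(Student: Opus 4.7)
The plan is to combine Bedrossian's pair characterization (Theorem~\ref{ThBe}) with two ad-hoc constructions. Suppose $H$ is a connected graph on at least three vertices such that every 2-connected claw-free graph $G$ satisfying $\varPhi(H,-2)$ is hamiltonian. Since any $H$-free graph vacuously satisfies $\varPhi(H,-2)$, this hypothesis forces every 2-connected claw-free $H$-free graph to be hamiltonian. Combining this with the trivial fact that every connected $P_3$-free graph on at least three vertices is complete (and hence hamiltonian), and applying Theorem~\ref{ThBe} with $R=K_{1,3}$ and $S=H$, we obtain $H\in\{P_3,P_4,P_5,P_6,C_3,Z_1,Z_2,B,N,W\}$.

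It remains to exclude $C_3$ and $Z_2$ from this preliminary list. The case $H=C_3$ is immediate: $C_3$ has no end-vertex, so $\varPhi(C_3,-2)$ is vacuously satisfied by every graph, whereas not every 2-connected claw-free graph is hamiltonian (the graph displayed in Fig.~2 is one example).

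For the case $H=Z_2$, I would work with the graph $G$ of Fig.~2. Formally, $G$ consists of two vertex-disjoint copies $A,B$ of $K_k$ (with $k\geq 3$), three new vertices $m_1,m_2,m_3$, and edges $a_im_i,\,m_ib_i$ for distinct $a_1,a_2,a_3\in V(A)$ and $b_1,b_2,b_3\in V(B)$, so $n=2k+3$. Four things need to be checked: (i) $G$ is 2-connected, using the redundancy of the three bridge paths together with the clique structure; (ii) $G$ is claw-free, since every neighborhood is a clique plus at most one further vertex non-adjacent to the rest; (iii) $G$ is not hamiltonian, because each $m_i$ has degree $2$, which forces a hypothetical Hamilton cycle to contain all three edges of the cut $\partial V(A)=\{a_1m_1,a_2m_2,a_3m_3\}$, contradicting the fact that a cycle must meet any edge-cut in an even number of edges; and (iv) $G$ satisfies $\varPhi(Z_2,-2)$. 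For (iv) I would first observe that every triangle of $G$ lies inside $A$ or $B$ (since $a_i\not\sim b_i$ leaves $m_i$ on no triangle), then show that in any induced copy of $Z_2$ the ``bridge vertex'' adjacent to the triangle must be some $m_i$ with the triangle sitting in the $K_k$ that contains $a_i$, and finally conclude that the only possible end-vertices of induced $Z_2$'s are the six attachment vertices $a_i,b_i$, each of degree $k\geq (2k+1)/3=(n-2)/3$.

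The hardest step is the enumeration in (iv): one must rule out both the middle vertices $m_i$ and the non-attachment clique vertices as possible end-vertices of induced $Z_2$'s, which proceeds by a short case analysis on where the triangle and its bridge vertex can sit; once this is done, the inequality $k\geq (2k+1)/3$ is automatic for $k\geq 1$, so no parameter tuning is required.
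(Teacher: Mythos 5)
Your proposal is correct and follows essentially the same route as the paper: reduce to Bedrossian's list via the observation that $H$-free graphs vacuously satisfy $\varPhi(H,-2)$, discard $C_3$ because it has no end-vertices, and discard $Z_2$ via the Fig.~2 graph. The paper merely asserts that this graph works, whereas you verify 2-connectedness, claw-freeness, non-hamiltonicity and $\varPhi(Z_2,-2)$ in detail (correctly: the only end-vertices of induced $Z_2$'s are the six attachment vertices, each of degree $k\geq(2k+1)/3$), so the only difference is the level of detail, not the approach.
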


What about the converse? Is every 2-connected claw-free graph
satisfying $\varPhi(H,-2)$ hamiltonian for all the graphs $H$ listed
in Proposition \ref{PrFi}?

Note that if a graph $G$ satisfies $\varPhi(P_i,k)$,
then it also satisfies $\varPhi(P_j,k)$ for $j\geq i$. Also note
that if $G$ satisfies $\varPhi(Z_1,k)$, then it also satisfies
$\varPhi(B,k)$; and if $G$ satisfies $\varPhi(B,k)$, then it also
satisfies $\varPhi(N,k)$. (We remark that a graph satisfying
$\varPhi(Z_2,k)$ cannot ensure it satisfies $\varPhi(W,k)$, although
$Z_2$ is an induced subgraph of $W$.) So, in the following, we just
consider the three graphs $P_6$, $N$ and $W$. We propose the
following problem:

\begin{problem}\label{PrIf}
Let $H=P_6$, $N$ or $W$. Is every 2-connected claw-free graph
satisfying $\varPhi(H,-2)$ hamiltonian?
\end{problem}

We believe that the answer to Problem \ref{PrIf} is positive, but
the proof may need more technical discussions. However, we can prove
a slightly weaker result as follows.

\begin{theorem}\label{ThIf}
Let $H=P_6$, $N$ or $W$, and let $G$ be a 2-connected claw-free
graph. If $G$ satisfies $\varPhi(H,3)$, then $G$ is hamiltonian.
\end{theorem}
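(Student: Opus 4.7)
The plan is to argue by contradiction: assume $G$ is a 2-connected claw-free graph on $n$ vertices satisfying $\varPhi(H,3)$ that is not hamiltonian, and derive a contradiction. As a first move, I would apply Ryj\'{a}\v{c}ek's closure to reduce to the line graph setting: $\cl(G)$ is claw-free, has the same hamiltonicity as $G$, and equals $L(F)$ for some triangle-free graph $F$ with $|E(F)| = n$. One needs to verify that $\varPhi(H,3)$ is preserved, at least in a form strong enough for the sequel; since closure only inserts edges inside neighborhoods of eligible vertices, it cannot create induced copies of $H$ whose end-vertices were previously of large degree but now land on a low-degree vertex. For each $H \in \{P_6, N, W\}$ this amounts to a short forbidden-configuration check.

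Next, translate the problem into $F$ via the Harary--Nash-Williams theorem: $L(F)$ is hamiltonian iff $F$ has a dominating closed trail. A vertex $e = xy$ of $L(F)$ has $L(F)$-degree $d_F(x) + d_F(y) - 2$, and induced copies of $P_6$, $N$, and $W$ in $L(F)$ correspond to explicit short trail configurations in $F$ (an induced length-$5$ trail for $P_6$; three edges sharing two pendant triangles for $N$; and the asymmetric ``wounded'' configuration for $W$). Under this dictionary, $\varPhi(H,3)$ becomes a lower bound on $d_F(x)+d_F(y)$ for edges sitting in the prescribed end-positions. I would then choose a dominating closed trail $T$ in $F$ of maximum edge-size; if $T$ is not spanning-dominating, pick a component $D$ of the uncovered part and an attachment edge of $D$ to $T$. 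The aim is to reroute $T$ through a high-degree edge in $D$ or near $D$, showing that either $T$ can be enlarged (contradicting maximality) or the attachment edge is forced to be an end-vertex of an induced $H$ whose corresponding $L(F)$-degree is smaller than $(n+3)/3$, contradicting $\varPhi(H,3)$.

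The main obstacle will be the rerouting step: for each of $H = P_6$, $N$, $W$, one must produce the correct induced configuration witnessing the contradiction, and do so without the argument splintering into three unrelated case analyses. The hardest case is likely $H = W$, since $W$ has its two end-vertices at different distances from its triangle, so the induced-subgraph forcing is asymmetric and requires matching two distinct local patterns in $F$ simultaneously, unlike the straight-line situation for $P_6$ or the fully symmetric situation for $N$. The slack of $k=3$ over the conjectural $k=-2$ provides roughly $5/3$ extra degree at each forced end-vertex, and I expect this to be exactly the margin needed to absorb the additive losses in the reroutings, thereby circumventing the finer local analysis that the sharper statement of Problem~\ref{PrIf} would demand.
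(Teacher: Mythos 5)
Your opening move (passing to the Ryj\'a\v{c}ek closure) matches the paper, but from there you diverge onto the Harary--Nash-Williams/dominating-closed-trail route, and two genuine gaps appear. First, you dismiss the preservation of $\varPhi(H,3)$ under closure as ``a short forbidden-configuration check.'' It is not: local completion can create induced copies of $N$ or $W$ whose end-vertices had small degree in $G$. Theorem~\ref{ThBrRyFa} controls only the $N$-free case (and even there says nothing about which vertices are end-vertices), and no analogous closure-stability statement is available for $W$. The paper never proves such a transfer; it keeps the degree hypothesis in $G$ and instead lifts structure from $\cl(G)$ back to $G$ via Lemma~\ref{LeRegion}(4), assembling every needed induced copy of $H$ inside $G$ itself out of $\varPi[\cdot]$ paths through interior vertices of regions. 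Without either that device or a proved transfer lemma, your translation of $\varPhi(H,3)$ into a degree condition on edges of the preimage graph $F$ is unfounded.

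Second, the engine of the proof is missing. The paper does not reroute a maximum dominating closed trail; it invokes Brousek's theorem (Theorem~\ref{ThBr}) to obtain an induced subgraph $P_{x_1,x_2,x_3}\in\mathcal{P}$ in $\cl(G)$, and the decisive quantitative step is Claim~\ref{ClHefty}: three pairwise nonadjacent, pairwise dissociated vertices---which by Lemma~\ref{LeRegion1}(3) share at most two common neighbours pairwise---cannot all have degree at least $n/3+1$, since inclusion--exclusion gives $n\geq 3+3(n/3+1)-6=n$ and a short structural argument then produces a Hamilton cycle (or, in the sharper subcase, $n\geq n+1$ outright). Each of the cases $H=P_6,N,W$ then reduces to exhibiting such a triple of forced hefty vertices near the ends of the paths $P^i$. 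Your sketch replaces all of this with the hope that the slack $k=3$ ``provides roughly $5/3$ extra degree \dots to absorb the additive losses in the reroutings,'' but no counting is carried out, and no mechanism is given for forcing the attachment edge of an uncovered component to be an end-vertex of an induced $H$ (the only place the hypothesis can be applied). As written, the proposal is a research plan whose two hardest steps are unexecuted, not a proof.
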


Note that the graph in Fig. 2 satisfies $\varPhi(Z_2,3)$ when $k\geq
6$. Combining with Proposition \ref{PrFi} and Theorem \ref{ThIf}
yields our main theorem.

\begin{theorem}\label{ThIffi}
Let $H$ be a connected graph on at least 3 vertices and let $G$ be a
2-connected claw-free graph. Then $G$ satisfying $\varPhi(H,3)$
implies $G$ is hamiltonian, if and only if
$H=P_3,P_4,P_5,P_6,Z_1,B,N$ or $W$.
\end{theorem}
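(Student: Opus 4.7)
The plan is to derive Theorem \ref{ThIffi} directly from Proposition \ref{PrFi} and Theorem \ref{ThIf}, treating the two directions of the equivalence separately.

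For the ``only if'' direction, suppose that every 2-connected claw-free graph $G$ satisfying $\varPhi(H,3)$ is hamiltonian. I first observe that every $H$-free graph trivially satisfies $\varPhi(H,3)$, because the end-vertex hypothesis is vacuous; hence every 2-connected $\{K_{1,3},H\}$-free graph is hamiltonian, and Bedrossian's Theorem \ref{ThBe} forces $H$ to be an induced subgraph of one of $P_6$, $N$, $W$. This narrows the candidate set to $\{P_3,P_4,P_5,P_6,C_3,Z_1,Z_2,B,N,W\}$. I then exclude $C_3$ and $Z_2$ exactly as in the justification of Proposition \ref{PrFi}: $C_3$ has no end-vertex, so $\varPhi(C_3,3)$ holds for every graph and any non-hamiltonian 2-connected claw-free graph (e.g.\ the one in Fig.~2) refutes the implication; and the same Fig.~2 graph with $k\geq 6$ satisfies $\varPhi(Z_2,3)$ without being hamiltonian, as already remarked before Proposition \ref{PrFi}.

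For the ``if'' direction, let $G$ be a 2-connected claw-free graph satisfying $\varPhi(H,3)$ with $H\in\{P_3,P_4,P_5,P_6,Z_1,B,N,W\}$. The cases $H=P_6$, $N$, $W$ are exactly the content of Theorem \ref{ThIf}. For the remaining cases I invoke the monotonicity observations stated just before Problem \ref{PrIf}: taking an initial segment of an induced $P_j$ yields an induced $P_i$ with the same end-vertex, so $\varPhi(P_i,3)\Rightarrow\varPhi(P_6,3)$ whenever $i\leq 6$; and deleting one pendant of an induced $N$ (respectively $B$) produces an induced $B$ (respectively $Z_1$) through the same end-vertex, giving $\varPhi(Z_1,3)\Rightarrow\varPhi(B,3)\Rightarrow\varPhi(N,3)$. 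Hence $G$ already satisfies $\varPhi(P_6,3)$ or $\varPhi(N,3)$, and Theorem \ref{ThIf} delivers hamiltonicity; for $H=Z_1$ one could alternatively cite Theorem \ref{ThFuYa}, since $\varPhi(Z_1,3)$ is stronger than $\varPhi(Z_1,-2)$. The whole technical weight of the characterization lies in Theorem \ref{ThIf}; the remainder of the argument is a short bookkeeping exercise, whose only delicate point is checking that the counterexamples used for the $k=-2$ version in Proposition \ref{PrFi} still refute the implication at the stronger threshold $k=3$, which is immediate since $H$-free implies $\varPhi(H,k)$ for every $k$.
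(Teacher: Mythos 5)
Your proof is correct and follows essentially the same route as the paper, which obtains Theorem \ref{ThIffi} by combining the argument of Proposition \ref{PrFi} (Bedrossian's theorem plus the exclusion of $C_3$ and of $Z_2$ via the Fig.~2 graph, which satisfies $\varPhi(Z_2,3)$ for $k\geq 6$) with the monotonicity remarks and Theorem \ref{ThIf}. Your version is in fact slightly more careful than the paper's one-line derivation, since you explicitly note that the $k=-2$ counterexamples must be re-checked at the threshold $k=3$.
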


Note that the case of $H=N$ in Theorem \ref{ThIffi} shows that every
2-connected claw-free graph $G$ is hamiltonian if every vertex of
$G$ which is an end-vertex of an induced copy of $N$, has degree at
least $|V(G)|/3+1$. This gives an affirmative solution of the
conjecture of Broersma up to an additive constant.

\section{Some preliminaries}

Two famous conjectures in the field of hamiltonicity of graphs are
Thomassen's conjecture \cite{Thomassen} that every 4-connected line
graph is hamiltonian and Matthews and Sumner's conjecture
\cite{MatthewsSumner} that every 4-connected claw-free graph is
hamiltonian. Ryj\'a\v{c}ek proved these two conjectures are
equivalent. One major tool for the proof is his closure theory
\cite{Ryjacek}. Now we introduce Ryj\'a\v{c}ek's closure theory,
which we will use in our proof.

Let $G$ be a claw-free graph and $x$ a vertex of $G$. Following the
terminology of Ryj\'a\v{c}ek \cite{Ryjacek}, we call $x$ an
\emph{eligible} vertex if $N(x)$ induces a connected graph but is
not a clique in $G$. The \emph{completion} of $G$ at $x$, denoted by
$G'_x$, is the graph obtained from $G$ by adding all missing edges
$uv$ with $u,v\in N(x)$.

Note that if a vertex, say $v$, has a complete neighborhood in $G$,
i.e., $G[N(v)]$ is complete, then it also has a complete
neighborhood in $G'_x$; also note that if $P'$ is an induced path in
$G'_x$, then there is an induced path $P$ in $G$ with the same
end-vertices such that $V(P)\subset V(P')\cup\{x\}$.

Let $G$ be a claw-free graph. The \emph{closure} of $G$, denoted by
$cl(G)$, is the graph defined by a sequence of graphs
$G_1,G_2,\ldots,G_t$, and vertices $x_1,x_2,\ldots,x_{t-1}$ such
that
\begin{mathitem}
\item[(1)] $G_1=G$, $G_t=\cl(G)$;
\item[(2)] $x_i$ is an eligible vertex of $G_i$, $G_{i+1}=(G_i)'_{x_i}$,
$1\leq i\leq t-1$; and
\item[(3)] $G_t$ has no eligible vertices.
\end{mathitem}

By $c(G)$ we denote the length of a longest cycle of $G$.

\begin{theorem}[Ryj\'a\v{c}ek \cite{Ryjacek}]\label{ThRy}
Let $G$ be a claw-free graph. Then
\begin{mathitem}
\item[(1)] the closure $cl(G)$ is well-defined;
\item[(2)] there is a triangle-free graph $H$ such that $cl(G)$ is the
line graph of $H$; and
\item[(3)] $c(G) = c(\cl(G))$.
\end{mathitem}
\end{theorem}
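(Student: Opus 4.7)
The plan is to prove the three parts in order. For part (1), the well-definedness of $\cl(G)$, I would adopt a Church--Rosser style confluence argument: show that whenever $x$ and $y$ are distinct eligible vertices of some intermediate graph $G_i$, after completing at $x$ either $y$ is still eligible in $(G_i)'_x$ or $N(y)$ has already become a clique there (in which case $y$ no longer needs to be completed). Since each completion step strictly increases the number of edges, the process terminates, and this local confluence, iterated, forces any two maximal completion sequences starting from $G$ to terminate at the same graph, which we then define to be $\cl(G)$.

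For part (2), the key preparatory observation is that a local completion preserves claw-freeness: any new claw would have to use at least one edge added inside $N(x)$, but those vertices have just been turned into a clique, contradicting the independence required in a claw. Hence $\cl(G)$ is claw-free and has no eligible vertices, so for every $v$ the set $N(v)$ induces either a clique or a disconnected graph; claw-freeness then forces the disconnected case to be a disjoint union of exactly two cliques. By the classical Krausz--Whitney characterization this is equivalent to $\cl(G)=L(H)$ for some graph $H$. To arrange $H$ triangle-free, I would argue that a triangle in $H$ corresponds to three pairwise adjacent edges in $L(H)=\cl(G)$ sharing a common vertex in such a way that some vertex ends up with a connected but non-complete neighborhood, contradicting clause (3) of the closure definition.

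For part (3) it suffices, by induction on the number of completion steps, to prove $c(G)=c(G'_x)$ for a single eligibility step. The inequality $c(G)\le c(G'_x)$ is immediate. For the reverse, let $C$ be a longest cycle of $G'_x$; call an edge of $C$ \emph{new} if it is not in $E(G)$, and note that any new edge has both ends in $N(x)$. If $C$ has no new edges, then $C\subseteq G$ and we are done. Otherwise, using that $G[N(x)]$ is connected together with the vertex $x$ itself (inserted into $C$ if it was absent, or rerouted through if already present), each new edge $uv$ on $C$ can be replaced by an alternative route through $G[N(x)\cup\{x\}]$ without decreasing the length of $C$; a careful case analysis (standard in the closure literature) yields a cycle in $G$ of the same length.

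The main obstacle will be the rerouting argument in part (3): one must handle the case in which $C$ uses several new edges simultaneously and show that the connectedness of $G[N(x)]$ provides globally compatible detours rather than merely local ones, without creating shortcuts that would strictly shorten $C$. This delicate swapping-and-splicing argument, together with the subtle fact that $x$ may or may not lie on $C$ already, is the technical heart of Ryj\'a\v{c}ek's original proof and is what distinguishes his closure from the more classical Bondy--Chv\'atal closure.
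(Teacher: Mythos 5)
This theorem is not proved in the paper at all: it is quoted verbatim from Ryj\'a\v{c}ek \cite{Ryjacek} and used as a black box, so there is no in-paper argument to compare yours against. Judged on its own terms, your outline of parts (1) and (2) follows the standard route (termination plus confluence for well-definedness; the local structure of neighborhoods in a closed claw-free graph plus the Krausz-type characterization for the line-graph statement), though two details are shaky: local confluence requires exhibiting a common reduct of the two one-step completions, not merely observing that $y$ stays eligible or becomes simplicial after completing at $x$; and a triangle of $H$ corresponds to three pairwise adjacent edges of $H$ that do \emph{not} share a common vertex (three edges through a common vertex form a $K_{1,3}$ in $H$, which also yields a triangle in $L(H)$) --- distinguishing these ``odd'' and ``even'' triangles is exactly the classical subtlety you would have to handle.

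The genuine gap is part (3), which is the entire mathematical content of the theorem and the only part this paper actually relies on (via $c(G)=c(\cl(G))$ and Theorem \ref{ThBr} applied to $\cl(G)$). Your plan --- replace each new edge $uv$ of a longest cycle $C$ of $G'_x$ by a detour through $G[N(x)\cup\{x\}]$ --- fails as stated when $C$ contains two or more new edges: the vertex $x$ can lie on a cycle only once, so at most one new edge can be rerouted through $x$, and the connectedness of $G[N(x)]$ only guarantees paths whose interiors may already be occupied by $C$. Ryj\'a\v{c}ek's actual proof is a global modification of $C$ using claw-freeness to control how $C$ intersects $N(x)$, not a sequence of independent local splices; deferring this to ``a careful case analysis (standard in the closure literature)'' leaves the theorem unproved. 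If you intend to cite the result, as the paper does, that is fine; if you intend to prove it, part (3) still needs the real argument.
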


Clearly every vertex has degree in $cl(G)$ not less than that in
$G$. Ryj\'a\v{c}ek proved that if $G$ is claw-free, then so is
$cl(G)$. A claw-free graph is said to be \emph{closed} if it has no
eligible vertices. The following properties of a closed claw-free
graph are obvious, and we omit the proofs.

\begin{lemma}\label{LeClosed}
Let $G$ be a closed claw-free graph. Then
\begin{mathitem}
\item[(1)] every vertex is contained in exactly one or two maximal
cliques;
\item[(2)] two distinct maximal cliques have at most one
common vertex;
\item[(3)] if two vertices are nonadjacent, then they have at most two common neighbors; and
\item[(4)] if a vertex has two neighbors in a maximal clique, then
it is contained in the clique.
\end{mathitem}
\end{lemma}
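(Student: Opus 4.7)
My plan is to prove the four statements in turn, driven by a single structural observation about neighborhoods. The key preliminary is that for every vertex $x$ of a closed claw-free graph $G$, the neighborhood $N(x)$ is either a single clique, or else a disjoint union of exactly two cliques with no edges between them. One direction is the definition of closedness: if $N(x)$ were connected but not a clique, then $x$ would be eligible. The other direction uses claw-freeness: any independent set of size $3$ in $N(x)$ would form an induced claw centred at $x$, so $\alpha(N(x))\leq 2$; hence if $N(x)$ is disconnected it has at most two components, and each component is itself a clique.

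From this dichotomy, (1) is immediate. If $N(x)$ is a clique then $\{x\}\cup N(x)$ is the unique maximal clique containing $x$. If $N(x)=C_1\cup C_2$ with $C_1,C_2$ nonempty cliques having no edges between them, then any clique $M\ni x$ has $M\setminus\{x\}$ connected and contained in $N(x)$, so $M\setminus\{x\}\subseteq C_i$ for some $i$; the only two maximal cliques through $x$ are therefore $\{x\}\cup C_1$ and $\{x\}\cup C_2$.

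For (2) I would use (1) directly. If distinct maximal cliques $K_1,K_2$ shared two vertices $u,v$, then applied at $u$, part (1) identifies $K_1,K_2$ with the two components of $N(u)$ together with $u$, forcing $K_1\cap K_2=\{u\}$ and contradicting $v\in(K_1\cap K_2)\setminus\{u\}$. For (4), given a vertex $y$ with two neighbors $u,v$ in a maximal clique $K$, both $v$ and $y$ lie in $N(u)$ and are adjacent, so they lie in the same component of $N(u)$; since $v\in K\setminus\{u\}$, the description from (1) forces $y\in K$.

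For (3), the slightly subtlest part, I would pick a common neighbor $x_1$ of two non-adjacent vertices $u,v$ and apply the structural fact at $x_1$: since $u,v$ are non-adjacent and both in $N(x_1)$, the neighborhood $N(x_1)$ is not a clique, so it splits into exactly two components, with $u$ and $v$ in different ones. If there were three common neighbors $x_1,x_2,x_3$ of $u,v$, then claw-freeness at $u$ (applied to the triple $x_1,x_2,x_3$) forces an edge among them, say $x_1x_2$; but then $x_2\in N(x_1)$ is adjacent to both $u$ and $v$, contradicting that $u$ and $v$ lie in different components of $N(x_1)$. I do not expect a real obstacle --- the authors call the lemma obvious --- but the one spot requiring genuine care is the preliminary two-component dichotomy for $N(x)$, which is where claw-freeness actually enters; everything else is bookkeeping.
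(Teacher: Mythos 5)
Your proof is correct. Note, however, that the paper offers nothing to compare it with: the authors state this lemma with the remark that the properties ``are obvious, and we omit the proofs.'' Your argument via the dichotomy for $N(x)$ (a clique, or exactly two cliques with no edges between them --- closedness ruling out the connected non-clique case, claw-freeness bounding the number of components and forcing each to be a clique) is the standard direct way to fill that gap, and all four deductions from it check out, including the slightly delicate part (3). An alternative the authors may have had in mind is to invoke Theorem \ref{ThRy}~(2): a closed claw-free graph is the line graph of a triangle-free graph, and all four properties then follow from the Krausz-type clique partition of a line graph; your route is more elementary and self-contained, at the cost of reproving a small piece of that structure by hand.
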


Now we introduce some new terminology which is useful for our proof.
Let $G$ be a claw-free graph and $K$ a maximal clique of $cl(G)$.
We call $G[K]$ a \emph{region} of $G$. For a vertex $v$ of $G$, we
call $v$ an \emph{interior vertex} if it is contained in only one
region, and a \emph{frontier vertex} if it is contained in two
distinct regions. For two vertices $u,v$ of $G$, we say that they
are \emph{associated} if they are in a common region, and
\emph{dissociated} otherwise. We use the notations $u\sim v$
($u\not \sim v$) to express the statement that $u$ and $v$ are
associated (dissociated). So two vertices are associated in $G$ if
and only if they are adjacent in $cl(G)$. Now we can reformulate
Lemma \ref{LeClosed} as follows.

\begin{lemma}\label{LeRegion1}
Let $G$ be a claw-free graph. Then
\begin{mathitem}
\item[(1)] every vertex is either an interior vertex of a region, or
a frontier vertex of two regions;
\item[(2)] every two regions are either disjoint or have only one
common vertex;
\item[(3)] every two dissociated vertices have at most two common neighbors; and
\item[(4)] if a vertex is associated with two vertices in a common
region, then it is also contained in the region.
\end{mathitem}
\end{lemma}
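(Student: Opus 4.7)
The plan is to reduce Lemma \ref{LeRegion1} to Lemma \ref{LeClosed} by exploiting the very definition of a region: a region of $G$ is, by definition, $G[K]$ for a maximal clique $K$ of $\cl(G)$, so regions of $G$ correspond bijectively to maximal cliques of $\cl(G)$, with the same underlying vertex set. Since Ryj\'a\v{c}ek's closure of a claw-free graph is a closed claw-free graph, Lemma \ref{LeClosed} applies to $\cl(G)$. The key observation I will use throughout is that $u\sim v$ in $G$ if and only if $uv\in E(\cl(G))$: if $u,v$ share a region, they lie in a common maximal clique of $\cl(G)$ and hence are adjacent there; conversely, any edge of $\cl(G)$ extends to a maximal clique, which is a region of $G$ containing both endpoints.

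Items $(1)$, $(2)$ and $(4)$ then follow by direct translation. For $(1)$, the bijection between regions of $G$ and maximal cliques of $\cl(G)$ preserves vertex containment, so Lemma \ref{LeClosed}$(1)$ says every vertex lies in exactly one or exactly two regions, which is precisely the interior/frontier dichotomy. For $(2)$, two distinct regions correspond to two distinct maximal cliques of $\cl(G)$, and Lemma \ref{LeClosed}$(2)$ bounds the size of their intersection by $1$. For $(4)$, if a vertex $v$ is associated with both $u$ and $w$, where $u,w$ lie in a common region $R$, then in $\cl(G)$ the vertex $v$ has two neighbors $u,w$ inside the maximal clique corresponding to $R$, so by Lemma \ref{LeClosed}$(4)$ it belongs to that clique and therefore lies in $R$.

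Item $(3)$ needs a small extra step because \emph{common neighbors} in the statement refers to adjacency in $G$, not in $\cl(G)$. If $u\not\sim v$, then $uv\notin E(\cl(G))$, and since $E(G)\subseteq E(\cl(G))$ we have $N_G(u)\cap N_G(v)\subseteq N_{\cl(G)}(u)\cap N_{\cl(G)}(v)$, the latter having size at most $2$ by Lemma \ref{LeClosed}$(3)$ applied to $\cl(G)$. Overall, there is essentially no obstacle: the whole argument is a bookkeeping exercise translating statements about maximal cliques of $\cl(G)$ into the just-introduced language of regions and (dis)association, and the only non-immediate point is the edge-inclusion $E(G)\subseteq E(\cl(G))$ needed in part $(3)$.
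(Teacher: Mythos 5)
Your proof is correct and matches the paper's intent exactly: the paper presents Lemma \ref{LeRegion1} as a direct reformulation of Lemma \ref{LeClosed} applied to $\cl(G)$ and gives no explicit proof, and your translation (including the observation that $u\sim v$ iff $uv\in E(\cl(G))$ and the inclusion $E(G)\subseteq E(\cl(G))$ needed for item $(3)$) is precisely the intended argument.
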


We can also get the following

\begin{lemma}\label{LeRegion}
Let $G$ be a claw-free graph. Then
\begin{mathitem}
\item[(1)] if $v$ is a frontier vertex of two regions $R,R'$,
then $N_R(v),N_{R'}(v)$ are cliques;
\item[(2)] if $R$ is a region of $G$, then $\cl(R)$ is complete;
\item[(3)] if $v$ is a frontier vertex and $R$ is a region
containing $v$, then $v$ has an interior neighbor in $R$ or $R$ is
complete and has no interior vertices; and
\item[(4)] if $u\sim v$, then there is an induced path
from $u$ to $v$ such that all internal vertices are interior
vertices in the region containing $u$ and $v$.
\end{mathitem}
\end{lemma}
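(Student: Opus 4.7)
The plan is to establish (1)--(4) in the order (3), (1), (2), (4), because part (3) supplies the structural backbone used by the others. The tools are claw-freeness of $G$, Theorem~\ref{ThRy}, and the reformulated clique structure in Lemma~\ref{LeRegion1}.

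For (3), I would argue by contrapositive. Suppose $v$ is a frontier vertex in $R, R'$ with no interior $G$-neighbor in $R$, and assume toward a contradiction that $R$ contains an interior vertex $u$. Then $u$'s only region is $R$, so $u$ is not associated with any vertex outside $R$; in particular, $u$ has no $G$-neighbor outside $R$. Since $v$ has no interior $G$-neighbor in $R$ by hypothesis, $uv \notin E(G)$; but $uv \in E(\cl(G))$, so $uv$ was added at some step of the closure by an eligible vertex $x$ with both $u, v$ in its neighborhood at that moment. Since $u$'s closure-neighborhood is confined to $R$, one forces $x \in V(R)$; combining this with a vertex $w \in R' \setminus \{v\}$ (which exists because $|R'|\ge 2$), I would exhibit an induced claw in $G$ centered at $v$ or $x$, contradicting claw-freeness. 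A parallel argument eliminates every non-edge inside $R$, so $R$ is $G$-complete.

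For (1), I would proceed by contradiction: suppose $a, b \in N_R(v)$ with $ab \notin E(G)$. By (3) applied to $R'$, either $v$ has an interior $G$-neighbor $c$ in $R'$, or $R'$ is $G$-complete with only frontier vertices. In the first case, $c$'s only region is $R'$, and since $R \cap R' = \{v\}$ by Lemma~\ref{LeRegion1}(2), we have $c \not\sim a$ and $c \not\sim b$, so $\{v; a, b, c\}$ is an induced claw in $G$, a contradiction. In the second case, distinct vertices of $R'$ have distinct secondary regions (Lemma~\ref{LeRegion1}(2)), so at most one vertex of $R' \setminus \{v\}$ shares a region with $a$ and at most one with $b$; hence a suitable $c$ still exists when $|R'|$ is large enough, and the few small cases are handled by direct inspection. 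The statement for $N_{R'}(v)$ follows by symmetry.

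For (2), I would show $\cl(R)$ is complete by restricting the global closure process to $V(R)$: any missing edge $uw$ in $R$ eventually appears in $\cl(G)$, and the first closure step creating it admits an eligible witness $x$ that must lie in $V(R)$ (by (3) and Lemma~\ref{LeRegion1}(1)) and remains eligible for the restricted closure inside $R$. Finally, for (4), given associated $u, v \in R$, I would induct on the $G$-distance from $u$ to $v$ inside $R$: if $uv \in E(G)$ the trivial one-edge path works; otherwise (3) produces an interior $G$-neighbor $u_1$ of $u$ in $R$, and a recursive interior-vertex path from $u_1$ to $v$ extends to the required path by prepending $u$. The main obstacle is (3): converting the abstract closure sequence into a concrete claw in $G$ requires careful bookkeeping of which closure step first creates the critical edge $uv$ and which eligible witness to use when building the contradicting claw, particularly when the witness lies strictly between $G$ and $\cl(G)$ in the intermediate graphs.
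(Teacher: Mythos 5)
Your plan inverts the paper's logical order: the paper proves (2) first and then derives (3) and (4) from it, and the engine of the whole argument is a fact you never isolate or establish, namely that \emph{every eligible vertex used in the closure sequence is an interior vertex} (after its local completion its neighbourhood is complete, hence complete in $\cl(G)$, hence it lies in a unique maximal clique). Once this is known, a completion at $x_i\notin V(R)$ cannot alter $G_i[V(R)]$, so the global closure process restricted to $V(R)$ is exactly the closure of $R$; this gives (2) immediately, gives (3) because completions at interior vertices non-adjacent to $v$ never enlarge $N(v)\cap V(R)$ while $\cl(R)$ must become complete, and gives (4) by pulling induced paths back through one local completion at a time, each inserted vertex being one of the interior vertices $x_{k_i}$.

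The concrete gaps are in your (3) and (4). In (3) you locate the closure step creating $uv$ and an eligible witness $x\in V(R)$, but the edges $xu,xv$ are edges of an intermediate graph $G_i$, not of $G$, so the promised ``induced claw in $G$ centered at $v$ or $x$'' is not available: you cannot put $x$ on a leg of a claw at $v$ without $vx\in E(G)$, and $x$ has no reason to be adjacent to your auxiliary vertex $w\in R'$. (The claim can be rescued without any claw: take the minimal $i$ for which $v$ has an interior-of-$R$ neighbour in $G_i$; the eligible vertex responsible for that new edge is itself an interior vertex of $R$ already adjacent to $v$ in $G_{i-1}$, contradicting minimality.) In (4), prepending $u$ to an induced interior path from $u_1$ to $v$ need not yield an induced path ($u$ may be adjacent to later vertices of it), and your induction on $G$-distance does not obviously decrease when passing from $u$ to an arbitrary interior neighbour $u_1$. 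Your (1) in its first case matches the paper, which simply takes any $G$-neighbour $y$ of $v$ in $R'$ and notes that $y$ cannot be adjacent to $x,x'$ by Lemma~\ref{LeRegion1}(4); your second case, with its ``small cases handled by direct inspection,'' is both incomplete and unnecessary.
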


\begin{proof}
(1) If there are two neighbors $x,x'$ of $v$ in $R$ such that
$xx'\notin E(G)$, then let $y$ be a neighbor of $v$ in $R'$. Note
that $y$ is nonadjacent to $x,x'$; otherwise it will be contained in
$R$. Now the subgraph induced by $\{v,x,x',y\}$ is a claw, a
contradiction. Thus $N_R(v)$, and similarly, $N_{R'}(v)$, is a
clique.

(2) Let $K=V(R)$. Let $G_1,G_2,\ldots,G_t$ be the sequence of
graphs, and $x_1,x_2,\ldots,x_{t-1}$ the sequence of vertices in the
definition of $cl(G)$. Note that for every $i\leq t-1$, $x_i$ has a
complete neighborhood in $G_{i+1}$, and then in $cl(G)$. This
implies that $x_i$ is an interior vertex. Thus if $x_i\notin K$,
then the completion of $G_i$ at $x_i$ does not change the structure
of $G_i[K]$. Let $x_{k_1},\ldots,x_{k_{t'-1}}$ be the subsequence of
$x_1,\ldots,x_{t-1}$ containing all vertices $x_{k_i}\in K$. Note
that $N_{G_{k_i}}(x_{k_i})\subset K$. Thus $x_{k_i}$ is an eligible
vertex of $G_{k_i}[K]$ and $(G_{k_i}[K])'_{x_{k_i}}=G_{k_i+1}[K]$.
Thus we have that $\cl(R)=\cl(G)[K]$ is the complete subgraph of
$cl(G)$ corresponding to $R$.

(3) If $R$ is complete in $G$, then either $v$ has an interior
neighbor in $R$ or $R$ has no interior vertices. Now we assume that
$R$ is not complete. By (2), $\cl(R)=\cl(G)[V(R)]$ is complete. This
implies that $R$ has at least one eligible vertex, and then, $R$ has
at least one interior vertex. If $v$ is nonadjacent to any interior
vertex in $R$, then the completion of an eligible vertex in $R$ does
not change the neighborhood of $v$. Thus $v$ will have no interior
neighbors in $R$ in the closure $\cl(R)$, a contradiction to that
$\cl(R)$ is a clique.

(4) Let $R$ be the region of $G$ containing $u$ and $v$. We use the
notation in the proof of (2). Note that for an induced path $P'$ in
$G_{k_{i+1}}[V(R)]$ connecting $u$ and $v$, there is also an induced
path $P$ in $G_{k_i}[V(R)]$ connecting $u$ and $v$ such that
$V(P)\subset V(P')\cup\{x_{k_i}\}$. This implies that there is an
induced path $P$ in $R$ connecting $u$ and $v$ such that
$V(P)\subset\{u,v\}\cup\{x_{k_i}: 1\leq i\leq t'-1\}$. Note that
every $x_{k_i}$ is an interior vertex of $R$. The proof is complete.
{\hfill$\Box$}
\end{proof}

In the case that $u\sim v$, we use $\varPi[uv]$ to denote an induced
path from $u$ to $v$ such that all internal vertices are interior
vertices in the region containing $u$ and $v$. For an induced path
$P=v_0v_1v_2\cdots v_k$ in $cl(G)$, we denote
$\varPi[P]=\varPi[v_0v_1]v_1\varPi[v_1v_2]v_2\cdots
v_{k-1}\varPi[v_{k-1}v_k]$ (note that $\varPi[P]$ is an induced path
of $G$).

Following \cite{Brousek}, we denote by $\mathcal{P}$ the class of
all graphs that are obtained by taking two disjoint triangles
$a_1a_2a_3a_1$, $b_1b_2b_3b_1$, and by joining every pair of
vertices $\{a_i,b_i\}$ by a path $P_{k_i}=a_ic_i^1c_i^2\cdots
c_i^{k_i-2}b_i$ for $k_i\geq 3$ or by a triangle $a_ib_ic_ia_i$. We
denote a graph from $\mathcal{P}$ by $P_{x_1,x_2,x_3}$, where
$x_i=k_i$ if $a_i$, $b_i$ are joined by a path $P_{k_i}$, and
$x_i=T$ if $a_i$, $b_i$ are joined by a triangle.

\begin{theorem}[Brousek \cite{Brousek}]\label{ThBr}
Every non-hamiltonian 2-connected claw-free graph contains an
induced subgraph in $\mathcal{P}$.
\end{theorem}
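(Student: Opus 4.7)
I will pass to Ryj\'a\v{c}ek's closure and reformulate the statement as a structural problem about a triangle-free graph $H$ with $cl(G) = L(H)$. Let $G$ be a 2-connected claw-free non-hamiltonian graph. By Theorem~\ref{ThRy}, $cl(G)$ is the line graph $L(H)$ of some triangle-free graph $H$, and $c(G)=c(cl(G))<|V(G)|$, so $L(H)$ is also non-hamiltonian. Since completion at an eligible vertex preserves 2-connectedness, $L(H)$ is 2-connected, which translates to $H$ being essentially 2-edge-connected with at least one vertex of degree $\geq 3$. By the Harary--Nash-Williams characterization, $L(H)$ hamiltonian is equivalent to $H$ admitting a closed trail whose vertex set dominates $E(H)$; hence $H$ has no such dominating closed trail.

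\textbf{Finding a $\mathcal{P}$-subgraph in $L(H)$.} Let $T$ be a closed trail of $H$ chosen to maximize $|V(T)|$ among those dominating as many edges as possible. Since $T$ is not dominating, some edge $e=xy$ of $H$ has neither endpoint on $V(T)$. Using essential 2-edge-connectivity of $H$, I will produce two internally disjoint paths from $\{x,y\}$ to $V(T)$, meeting $T$ at vertices $u$ and $v$ (possibly $u=v$, a case handled separately). The extremal choice of $T$ forces: (i) $u\neq v$; (ii) at each of $u,v$ the trail $T$ uses two edges distinct from the path edge; and (iii) the three edges incident to $u$ (respectively $v$), namely the two trail edges and the entering path edge, are pairwise distinct — any violation would allow a short-circuit rerouting of $T$ producing a longer dominating trail. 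Because $H$ is triangle-free, the three edges at $u$ form a triangle in $L(H)$, and likewise for $v$; the two arcs of $T$ between $u$ and $v$ together with the two escape paths (joined through $e$) provide three internally disjoint $(u,v)$-trails in $H$, and hence three induced paths between the two triangles in $L(H)$. Each such arc is either a path in $L(H)$ of length $\geq 2$ (case $x_i = k_i \geq 3$) or collapses to a triangle when two consecutive edges share their common endpoint with an edge at the opposite end (case $x_i = T$). Thus $L(H)$ contains an induced copy of some $P_{x_1,x_2,x_3}\in\mathcal{P}$.

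\textbf{Lifting back to $G$.} By Lemma~\ref{LeRegion}(2) and the $\varPi[\cdot]$ construction, each induced path $P'$ in $cl(G)$ lifts to an induced path $P=\varPi[P']$ in $G$ with the same end-vertices, whose internal vertices lie in the interiors of the regions visited by $P'$. The two triangles of the $\mathcal{P}$-subgraph are cliques in $cl(G)$, so each lies in a single region; the three connecting arcs traverse sequences of regions that are pairwise disjoint except at their shared end-vertices. Lifting each arc by $\varPi[\cdot]$ and keeping the six vertices of the two triangles unchanged produces the required induced copy of $P_{x_1,x_2,x_3}$ inside $G$.

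\textbf{Main obstacle.} The delicate part is step~(ii)--(iii) above: guaranteeing that at \emph{both} attachment vertices $u$ and $v$ there really are three distinct edges forming a triangle of $L(H)$, rather than only two. This requires a careful extremal choice of $T$ and a short-circuit argument showing that any deficiency contradicts either maximality of $T$ or the non-existence of a dominating closed trail. A secondary difficulty is verifying that the $\varPi[\cdot]$-lifts of the three arcs do not create chords between different regions in $G$; this follows from Lemma~\ref{LeRegion1}(2),(4), but the bookkeeping has to be done uniformly for both the ``path arc'' and the ``triangle arc'' cases.
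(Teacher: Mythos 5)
The paper does not prove this statement at all: Theorem~\ref{ThBr} is quoted from Brousek \cite{Brousek} as a known result, so there is no internal proof to compare against; your proposal has to be judged on its own. As such it is a reasonable high-level strategy (closure, line graphs of triangle-free graphs, the Harary--Nash-Williams correspondence between hamiltonicity of $L(H)$ and dominating closed trails of $H$, and an extremal trail argument), but it is a plan rather than a proof. Essentially the entire combinatorial content of Brousek's theorem sits in the step you yourself defer as the ``main obstacle'': showing that a suitably extremal non-dominating closed trail $T$ forces two attachment vertices $u\neq v$, each incident with three pairwise distinct edges forming a triangle of $L(H)$, together with three connections that are internally disjoint \emph{and} induce no extra adjacencies in $L(H)$. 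Note also that the two arcs of $T$ between $u$ and $v$ are trails, so their edge sequences give walks, not necessarily induced paths, in $L(H)$; extracting induced paths and excluding chords between the three connections needs a further minimality/selection argument that you have not supplied, and the case $u=v$ is not dispatched by essential 2-edge-connectedness alone.

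There is also a concrete error in the lifting step. Vertices lying in a common region of $G$ (i.e., adjacent in $cl(G)$) need not be adjacent in $G$, so the two triangles of the $\mathcal{P}$-subgraph found in $cl(G)$ need not be triangles of $G$; ``keeping the six vertices of the two triangles unchanged'' therefore does not in general yield an induced member of $\mathcal{P}$ in $G$. What is actually true --- and is itself a nontrivial lemma in this circle of ideas (see \cite{Brousek,BrousekRyjacekFavaron}) --- is that the \emph{class} $\mathcal{P}$ is stable under the closure: if $cl(G)$ contains an induced member of $\mathcal{P}$, then $G$ contains an induced member of $\mathcal{P}$, possibly of a different type $P_{x_1,x_2,x_3}$ (for instance, a triangle connection in $cl(G)$ may have to be replaced by a path connection in $G$). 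Incidentally, for the use the present paper makes of Theorem~\ref{ThBr} this lifting is not needed, since the theorem is only ever applied to the closed graph $cl(G)$; but for the statement as written your argument is incomplete at both ends.
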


We mention the following result deduced from Brousek et al.
\cite{BrousekRyjacekFavaron} to complete this section.

\begin{theorem}[Brousek et al.
\cite{BrousekRyjacekFavaron}]\label{ThBrRyFa} Let $G$ be a claw-free
graph. If $G$ is $N$-free, then $cl(G)$ is also $N$-free.
\end{theorem}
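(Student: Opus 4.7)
The plan is to prove the single-step statement: if $G$ is claw-free and $N$-free and $x$ is an eligible vertex of $G$, then $G'_x$ is also $N$-free. Iterating this along the closure sequence $G=G_1,\dots,G_t=cl(G)$, together with the known preservation of claw-freeness at each step, then yields the theorem.

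Suppose for contradiction that $G'_x$ contains an induced $N$ on $S=\{a_1,a_2,a_3,b_1,b_2,b_3\}$, with triangle $a_1a_2a_3$ and pendants $a_ib_i$. First I would check $x\notin S$: if $x=a_i$ then the $N$-neighbors $a_j,a_k,b_i$ of $x$ all lie in $N_G(x)$, which becomes a clique in $G'_x$ and forces $a_jb_i\in E(G'_x)$, impossible in $N$; if $x=b_i$ then $d_G(x)=1$ is incompatible with eligibility. So $x\notin S$, and I set $T:=N_G(x)\cap S$. Two observations drive the case analysis: $T$ must induce a clique of $N$ (any two members of $T$ become adjacent in $G'_x$ and so must be adjacent in the $N$-copy), and $|T|\ge 2$ (otherwise no new edge lies inside $G'_x[S]$ and already $G[S]=N$). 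The cliques of $N$ of size at least two are the three triangle edges $\{a_i,a_j\}$, the three pendant edges $\{a_i,b_i\}$, and the triangle $\{a_1,a_2,a_3\}$.

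The case analysis is short in most cases. If $T=\{a_i,a_j\}$ then $a_ia_j\notin E(G)$, and $\{a_k;a_i,a_j,b_k\}$ is a claw in $G$. If $T=\{a_i,b_i\}$ then the triangle $a_1a_2a_3$ and the two other pendants $a_jb_j,a_kb_k$ all survive in $G$, and $\{x,a_1,a_2,a_3,b_j,b_k\}$ (with $\{i,j,k\}=\{1,2,3\}$) is an induced $N$ of $G$, with $x$ playing the role of the pendant at $a_i$. If $T=\{a_1,a_2,a_3\}$, let $t$ denote the number of triangle edges $a_ia_j$ already present in $G$: $t=3$ gives $G[S]=N$ directly; $t=0$ gives the claw $\{x;a_1,a_2,a_3\}$; $t=2$ (the two present edges must share some vertex $a_i$) gives the claw $\{a_i;a_j,a_k,b_i\}$.

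The one subcase that requires a choice is $T=\{a_1,a_2,a_3\}$ with $t=1$, say $a_1a_2\in E(G)$ and $a_1a_3,a_2a_3\notin E(G)$: the labelled copy $S$ is not yet an $N$ in $G$, and no claw is directly visible. The trick is to use $x$ as a new triangle vertex together with $a_1$ and $a_2$, and $a_3$ as the pendant at $x$; one then verifies that $\{x,a_1,a_2,a_3,b_1,b_2\}$ induces $N$ in $G$, with triangle $xa_1a_2$ and pendants $a_3$ at $x$, $b_1$ at $a_1$, $b_2$ at $a_2$. The non-edges $xb_1,xb_2$ come from $b_1,b_2\notin T$, while $a_3a_1,a_3a_2,a_3b_1,a_3b_2,b_1b_2$ are guaranteed by the $N$-structure on $S$ combined with the assumption $t=1$. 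Recognising this reshuffled $N$-copy is the only real obstacle; once found, the proof is complete.
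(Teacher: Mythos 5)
The paper does not prove this theorem at all --- it is quoted from Brousek, Ryj\'a\v{c}ek and Favaron --- so there is no in-paper argument to compare against. Your route is the standard one for stability results of this kind: show that a single local completion at an eligible vertex preserves $N$-freeness of a claw-free, $N$-free graph, then iterate along the closure sequence using the known preservation of claw-freeness. The core of your argument is sound and essentially complete. The two observations (that $T=N_G(x)\cap S$ must induce a clique of the copy of $N$, and that $|T|\ge 2$, since otherwise no added edge lies inside $S$ and $G[S]\cong N$ already) do reduce the problem to the listed cases, and each case is settled correctly, including the only delicate subcase $T=\{a_1,a_2,a_3\}$ with exactly one triangle edge present in $G$, where your re-labelled copy with triangle $xa_1a_2$ and pendant $a_3$ at $x$ checks out: $xb_1,xb_2\notin E(G)$ because $b_1,b_2\notin N_G(x)$, and all remaining required non-edges are non-edges of $G'_x$ and hence of $G$.

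One step is wrong as written: to exclude $x=b_i$ you argue that ``$d_G(x)=1$ is incompatible with eligibility,'' but $d_G(x)=1$ does not follow --- $x$ has degree $1$ only inside the induced copy of $N$, and may have arbitrarily many neighbours outside $S$. The correct (and easy) argument is the same one you already use to get $|T|\ge 2$: if $x=b_i$, then $N_{G'_x}(x)\cap S=N_G(x)\cap S=\{a_i\}$, so no added edge has both ends in $S$, whence $G[S]=G'_x[S]\cong N$, contradicting the $N$-freeness of $G$. With that repair the proof is complete. A minor point of polish: in the case $T=\{a_i,a_j\}$ you assert $a_ia_j\notin E(G)$ without justification; the reason is again that otherwise no edge inside $S$ is added and $G[S]\cong N$ already.
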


\section{Proof of Theorem \ref{ThIf}}

Assume that $G$ is not hamiltonian. By Theorems \ref{ThRy} and
\ref{ThBr}, $cl(G)$ contains an induced subgraph
$P_{x_1,x_2,x_3}\in\mathcal {P}$. We use the notation $a_i,b_i,c_i$
and $c_i^j$ defined in Section 2. If $x_i=k_i$, then let $P^i$ be
the path $a_ic_i^1c_i^2\cdots c_i^{k_i-2}b_i$; if $x_i=T$, then let
$P^i=a_ib_i$. Let $A$ be the region of $G$ containing the vertices
$a_1,a_2,a_3$, $B$ be the region of $G$ containing the vertices
$b_1,b_2,b_3$. Note that $A$ and $B$ are possibly not disjoint. If
they are not disjoint, then let $c$ be the only common vertex of $A$ and
$B$. Clearly, $a_i,b_i$ and $c$ (if exists) are all frontier
vertices. If $x_i=T$, then let $a'_i$ be the successor of $a_i$ in
$\varPi[a_ic_i]$ and $b'_i$ be the successor of $b_i$ in
$\varPi[b_ic_i]$; if $x_i=k_i$, then let $a'_i$ be the successor of
$a_i$ in $\varPi[a_ic_i^1]$ and $b'_i$ be the successor of $b_i$ in
$\varPi[b_ic_i^{k_i-2}]$.

In this section, we say that a vertex is \emph{hefty} if it has
degree at least $n/3+1$.

\begin{claim}\label{ClHefty}
Let $v_1,v_2,v_3$ be three pairwise nonadjacent vertices of $G$.
\begin{mathitem}
\item[(1)] If $v_1\not\sim v_2$, $v_1\not\sim v_3$ and $v_2,v_3$
have at most one common neighbor, then one of $v_1,v_2,v_3$ is not
hefty.
\item[(2)] If $v_1\not\sim v_2$, $v_1\not\sim v_3$ and $v_2\not\sim v_3$, then
one of $v_1,v_2,v_3$ is not hefty.
\end{mathitem}
\end{claim}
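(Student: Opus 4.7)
The plan is to bound $d(v_1)+d(v_2)+d(v_3)$ from above via inclusion-exclusion on the three neighborhoods, then compare with the lower bound $3(n/3+1)=n+3$ forced by the assumption that all three of $v_1,v_2,v_3$ are hefty. Three ingredients feed into the upper bound. First, pairwise non-adjacency of $v_1,v_2,v_3$ puts no $v_j$ into any $N(v_i)$, so $|N(v_1)\cup N(v_2)\cup N(v_3)|\leq n-3$. Second, the set $\{v_1,v_2,v_3\}$ is independent, so any common neighbor of all three would be the center of an induced claw, giving $N(v_1)\cap N(v_2)\cap N(v_3)=\emptyset$. Third, whenever $v_i\not\sim v_j$, Lemma \ref{LeRegion1}(3) provides $|N(v_i)\cap N(v_j)|\leq 2$. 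Inclusion-exclusion (with the empty triple intersection) then produces
\[
\sum_{i=1}^{3} d(v_i) \;\leq\; (n-3)+\sum_{i<j}|N(v_i)\cap N(v_j)|.
\]

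For (1), the three bounds $2,2,1$ on the pairwise intersections give $\sum d(v_i)\leq n+2$, contradicting the hefty lower bound $n+3$; hence at least one $v_i$ is not hefty.

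For (2), the same formula only produces $\sum d(v_i)\leq n+3$, exactly matching the hefty lower bound. My plan is to reduce (2) to (1) via a case split on $|N(v_2)\cap N(v_3)|$: if this is at most $1$, part (1) (after relabeling) finishes the job. Otherwise every pair has exactly two common neighbors and equality must hold throughout; since the triple intersection is empty the six common neighbors $u_{ij},u'_{ij}$ are pairwise distinct, $d(v_i)=n/3+1$ for every $i$ (so $3\mid n$ or we already have a contradiction through integrality), and each $u_{ij}$ is a frontier vertex whose two regions contain $v_i$ and $v_j$ respectively (by Lemmas \ref{LeRegion1}(1)--(2) and \ref{LeRegion}(1)). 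The main obstacle is to rule out this symmetric extremal configuration: I expect that claw-freeness at each $v_i$ forbids three independent vertices among its four distinguished neighbors $u_{ij},u'_{ij},u_{ik},u'_{ik}$, which in turn forces enough internal adjacencies that some $u_{ij}$ is eligible in $G$, whereupon closing at $u_{ij}$ would add the edge $v_iv_j$ to $\cl(G)$ and contradict $v_i\not\sim v_j$.
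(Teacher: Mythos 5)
Part (1) is correct and is essentially the paper's computation. The genuine problem is the extremal case of part (2). Your reduction is fine up to the point where every pair $v_i,v_j$ has exactly two common neighbours $u_{ij},u'_{ij}$ and every vertex outside $\{v_1,v_2,v_3\}$ has a neighbour among $v_1,v_2,v_3$; but the mechanism you propose for killing this configuration cannot fire. Since $v_i\not\sim v_j$, these two vertices lie in the two \emph{distinct} regions of the frontier vertex $u_{ij}$, and in $\cl(G)$ the neighbourhood of $u_{ij}$ is the disjoint union of two cliques with \emph{no} edge between them: any cross edge would make $N_{\cl(G)}(u_{ij})$ connected and non-complete (it contains the non-edge $v_iv_j$), so $u_{ij}$ would be eligible in $\cl(G)$, contradicting that $\cl(G)$ is closed. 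A fortiori $N_G(u_{ij})$ is disconnected, so no $u_{ij}$ is, or can ever become, eligible without $v_iv_j$ first being added --- which is exactly what $v_i\not\sim v_j$ forbids. The adjacencies you do get from claw-freeness at $v_i$ (e.g.\ $u_{12}u_{13}\in E(G)$ after relabelling, since one of $u_{12},u'_{12}$ and one of $u_{13},u'_{13}$ must fall into the same region of $v_1$, where $N(v_1)$ is a clique by Lemma \ref{LeRegion} (1)) all stay inside a single region and never produce the cross edge your plan needs.

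The missing ingredient is that this claim sits under the standing assumption of Section 3 that $G$ is \emph{not hamiltonian}, and your argument never invokes it. Indeed, the extremal configuration is realizable (six cliques of size $k$ glued along $v_1,v_2,v_3$ and the six vertices $u_{ij},u'_{ij}$ gives $n=6k-9$ and $d(v_i)=2k-2=n/3+1$), so no purely local or closure-theoretic argument can exclude it. The paper's route is: equality forces $V(G)$ to be covered by the six regions attached to $v_1,v_2,v_3$, each of which is a clique; claw-freeness at $v_1$ yields (say) $u_{12}u_{13}\in E(G)$; one then starts from the cycle $v_1u'_{12}v_2u_{12}u_{13}v_3u'_{13}v_1$ and inserts every remaining vertex between two consecutive cycle vertices of its own clique, producing a Hamilton cycle and the desired contradiction. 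You need to replace your eligibility argument with this (or some other use of the non-hamiltonicity hypothesis) for part (2) to close.
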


\begin{proof}
(1) By Lemma \ref{LeRegion1} (3), $|N(v_1)\cap N(v_2)|\leq 2$ and
$|N(v_1)\cap N(v_3)|\leq 2$. Note that $|N(v_2)\cap N(v_3)|\leq 1$.
If all these three vertices are hefty, i.e., $d(v_i)\geq n/3+1$ for
$i=1,2,3$, then
\begin{align*}
n\geq 3+\sum_{1\leq i\leq 3}d(v_i)-\sum_{1\leq i<j\leq 3}|N(v_i)\cap
N(v_j)|\geq 3+3\left(\frac{n}{3}+1\right)-5=n+1,
\end{align*}
a contradiction.

(2) By (1) and Lemma \ref{LeRegion1} (3), each of
$\{v_1,v_2\},\{v_1,v_3\},\{v_2,v_3\}$ has exactly two common
neighbors. Let $u_{ij}$ and $u'_{ij}$ be the two common neighbors of
$v_i$ and $v_j$. By Lemma \ref{LeRegion1} (4), $u_{ij}\not\sim
u'_{ij}$. This implies that all the three vertices $v_1,v_2,v_3$ are
frontier vertices. Moreover, by applying a similar argument as in
(1), we have
$$n\geq 3+d(v_1)+d(v_2)+d(v_3)-6\geq
3\cdot\left(\frac{n}{3}+1\right)-3=n.$$ This implies that every
vertex of $G$ is adjacent to at least one vertex in
$\{v_1,v_2,v_3\}$. Thus $G$ consists of the six regions containing
$v_1,v_2$ and $v_3$, and all the six regions are cliques by Lemma
\ref{LeRegion} (1).

Since $u_{12}\not\sim u'_{12}$ and $u_{13}\not\sim u'_{13}$ and all the
four vertices are adjacent to $v_1$, we have either $u_{12}\sim
u_{13}$ or $u_{12}\sim u'_{13}$. We assume without loss of
generality that $u_{12}\sim u_{13}$, which implies that
$u_{12}u_{13}\in E(G)$. Now we can begin with the cycle
$C_0=v_1u'_{12}v_2u_{12}u_{13}v_3u'_{13}v_1$, and add other vertices,
one by one, to the cycle at the place between two associated
vertices, and finally obtain a Hamilton cycle of $G$, a contradiction.
{\hfill$\Box$}
\end{proof}

\subsection*{The case $H=P_6$}

Let $P=a'_1a_1\varPi[a_1a_2]a_2\varPi[P^2]b_2\varPi[b_2b_3]b_3b'_3$.
Note that $P$ is an induced copy of $P_l$ with $l\geq 6$. This
implies that $a'_1$, and similarly, $a'_2,a'_3$, are hefty. Note
that $a'_1,a'_2$ and $a'_3$ are pairwise dissociated in $G$, a
contradiction to Claim \ref{ClHefty}.

\subsection*{The case $H=N$}

\begin{claim}\label{ClNAB}
There are at least two hefty vertices in $A$ (and similarly, in
$B$).
\end{claim}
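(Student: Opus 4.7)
My plan is to exhibit two distinct vertices of $V(A)$ that arise as end-vertices of induced copies of $N$ in $G$; by the hypothesis $\varPhi(N,3)$, each such vertex then has degree at least $n/3+1$, which is the definition of \emph{hefty} used in this section.

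The natural induced copy of $N$ supplied by the Brousek structure is the one on $\{a_1,a_2,a_3,a'_1,a'_2,a'_3\}$: the triangle lies on $\{a_1,a_2,a_3\}\subset V(A)$ and the three pendants are the $a'_i$. To confirm it is induced, I would verify the non-adjacencies $a_i\not\sim a'_j$ and $a'_i\not\sim a'_j$ for $i\neq j$; these follow from the facts that each $a'_i$ lies in the region $R_i$ of $a_i$ other than $A$ (either as an interior vertex by Lemma~\ref{LeRegion}(4), or else as $c_i$ or $c_i^1$), that distinct regions share at most one vertex (Lemma~\ref{LeRegion1}(2)), and that $P_{x_1,x_2,x_3}$ is induced in $cl(G)$. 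This proves that $a'_1,a'_2,a'_3$ are hefty, but they need not lie in $V(A)$, so the construction is only preparatory.

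To place end-vertices inside $V(A)$, I would mirror the construction on the $B$-side. Whenever $x_i=T$, the triangle $a_ib_ic_i$ is an actual triangle of $G$, so $a_ib_i\in E(G)$, and I claim that $\{b_1,b_2,b_3,a_i,b'_j,b'_k\}$ (with $\{i,j,k\}=\{1,2,3\}$) induces an $N$ in $G$ with $a_i$ as an end-vertex; the required non-adjacencies $a_i\not\sim b_j,b_k,b'_j,b'_k$ and $b'_j\not\sim b'_k$ all reduce, via Lemma~\ref{LeRegion1} and the induced form of $P_{x_1,x_2,x_3}$, to the fact that distinct regions intersect in at most one vertex. Hence each index $i$ with $x_i=T$ contributes a hefty vertex $a_i\in V(A)$, and the claim is immediate as soon as at least two of $x_1,x_2,x_3$ equal $T$.

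The main obstacle is the remaining case in which at most one $x_i$ equals $T$. For this case I would proceed along one of two routes: either (a) construct an alternative induced $N$ with an end-vertex in $V(A)$, using triangles supported along one of the paths $P^i$ when $k_i\geq 4$ or inside a region neighbouring $A$ (exploiting Lemma~\ref{LeRegion}(3) to produce the needed interior neighbours); or (b) argue by contradiction using Claim~\ref{ClHefty}, by assuming $V(A)$ contains at most one hefty vertex and selecting two non-hefty vertices in $V(A)$ together with a carefully chosen dissociated third vertex (for instance one of the already-hefty $a'_i$ or $b'_i$) to force the hypothesis of Claim~\ref{ClHefty}(1) or (2) and obtain a contradiction. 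The delicate step in either route is verifying the dissociation and common-neighbour conditions, which should ultimately reduce to region-intersection facts from Lemma~\ref{LeRegion1}.
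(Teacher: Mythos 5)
There is a genuine gap, and it sits exactly where the paper's proof does its real work. Your candidate copies of $N$ --- the one on $\{a_1,a_2,a_3,a'_1,a'_2,a'_3\}$ and the one on $\{b_1,b_2,b_3,a_i,b'_j,b'_k\}$ --- are induced copies of $N$ in $cl(G)$, not in $G$. A region is $G[K]$ for a maximal clique $K$ of $cl(G)$; the vertices $a_1,a_2,a_3$ lie in the common region $A$ and are therefore pairwise \emph{associated}, but that only means they are adjacent in $cl(G)$. The edges $a_1a_2$, $a_1a_3$, $a_2a_3$ (and likewise $b_ib_j$, and $a_ib_i$ when $x_i=T$) need not exist in $G$, so your triangles may simply not be there. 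Since $\varPhi(N,3)$ speaks only of end-vertices of induced copies of $N$ \emph{in $G$}, none of your heftiness conclusions follow as stated. This is precisely why the paper invokes Theorem~\ref{ThBrRyFa}: setting $G'=G[V(A)\cup\{a'_1,a'_2,a'_3\}]$, one has $\cl(G')=\cl(G)[V(G')]$ by Lemma~\ref{LeRegion}(2), and $\cl(G')$ contains an induced $N$; the contrapositive of Theorem~\ref{ThBrRyFa} then forces $G'$ itself to contain an induced $N$ (on some possibly quite different vertex set), whose three end-vertices are hefty and pairwise nonadjacent. At most one of them can lie in $\{a'_1,a'_2,a'_3\}\setminus V(A)$, since otherwise the three would be pairwise dissociated, contradicting Claim~\ref{ClHefty}(2). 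Your proposal never uses Theorem~\ref{ThBrRyFa} and has no substitute for it.

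A second problem is that the case ``at most one $x_i=T$'' is not actually handled: route (a) is a sketch with no construction, and route (b) is logically inverted. Claim~\ref{ClHefty} concludes that one of three suitably positioned vertices is \emph{not} hefty; to derive a contradiction from it you must exhibit three \emph{hefty} vertices satisfying its hypotheses. Starting from two non-hefty vertices in $V(A)$ cannot contradict Claim~\ref{ClHefty} --- it is consistent with its conclusion. So even if the adjacency issue above were repaired, the argument would remain incomplete in the main case.
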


\begin{proof}
Let $G'=G[V(A)\cup\{a'_1,a'_2,a'_3\}]$. From Lemma \ref{LeRegion}
(2), we can see that $\cl(G')=\cl(G)[V(G')]$. Note that the subgraph
of $\cl(G)[V(G')]$ induced by $\{a_1,a'_1,a_2,a'_2,a_3,a'_3\}$ is an
$N$. By Theorem \ref{ThBrRyFa}, $G'$ contains an induced $N$. This
implies that $V(G')$ contains at least three pairwise nonadjacent
hefty vertices. If two of them are not in $A$, then we assume
without loss of generality that $a'_1,a'_2$ are hefty. Note that the
third hefty vertex is in $(V(A)\cup\{a'_3\})\backslash\{a_1,a_2\}$.
This implies that the three hefty vertices are pairwise dissociated,
a contradiction to Claim \ref{ClHefty}. {\hfill$\Box$}
\end{proof}

Let $b,b'$ be two hefty vertices in $B$. Set $$N_i=\{v\in V(A):
d_A(a_1,v)=i\} \mbox{ and } j=\max\{i: N_i\neq\emptyset\}.$$ Note
that $N_0=\{a_1\}$ and $N_1=N_{A}(a_1)$. In addition, we define that
$N_{-1}=\{a'_1\}$. Note that for any vertex $v\in N_i$, with $1\leq
i\leq j$, $v$ has a neighbor in $N_{i-1}$. Also note that if $v$ has
a neighbor in $N_{i+1}$, $1\leq i\leq j-1$, then by Lemma
\ref{LeRegion} (1), $v$ is an interior vertex, especially, $v$ is
not $a_2,a_3$ and $c$.

\begin{claim}\label{ClNClique}
  $N_i$ is a clique for all $1\leq i\leq j$.
\end{claim}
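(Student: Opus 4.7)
The plan is to prove Claim~\ref{ClNClique} by induction on $i$. For the base case $i=1$, note that $N_1 = N_A(a_1)$; since $a_1$ lies in $A$ and also in the region containing the edge $a_1a'_1$, it is a frontier vertex of $A$, and Lemma~\ref{LeRegion}(1) immediately yields that $N_A(a_1)$ is a clique.

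For the inductive step with $i\ge 2$, I would assume that $N_k$ is a clique for every $k<i$ and suppose, for contradiction, that there exist distinct $u,v\in N_i$ with $uv\notin E(G)$. The aim is to locate a claw in $G$. The key preliminary observation is a distance fact: for every $w\in N_{i-1}$ and every back-neighbor $w'\in N_{i-2}$ of $w$, each vertex of $N_i$ is non-adjacent to $w'$. Let $X:=N(u)\cap N_{i-1}$ and $Y:=N(v)\cap N_{i-1}$; both are nonempty by the note preceding the claim, and both are cliques by the inductive hypothesis.

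I would split on whether $X\cap Y=\emptyset$. If $X\cap Y\neq\emptyset$, pick any $w$ in the intersection and a back-neighbor $w'\in N_{i-2}$ of $w$: then $\{w;u,v,w'\}$ is a claw (edges $wu,wv,ww'$; non-edges $uv,uw',vw'$), a contradiction. Otherwise, pick $u_1\in X$ and $v_1\in Y$: disjointness forces $u_1v,v_1u\notin E(G)$, while the inductive hypothesis gives $u_1v_1\in E(G)$, so $\{u,u_1,v_1,v\}$ induces a $P_4$. For any $u_2\in N(u_1)\cap N_{i-2}$ we have $u_2u\notin E(G)$; if also $u_2v_1\notin E(G)$ then $\{u_1;u,v_1,u_2\}$ is a claw. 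Hence $u_2v_1\in E(G)$ for every such $u_2$, and by symmetry $N(u_1)\cap N_{i-2}=N(v_1)\cap N_{i-2}$.

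The technical heart of the argument, and the main obstacle, is closing this residual configuration. My plan is to invoke the induced path $\varPi[uv]$ from $u$ to $v$ inside $A$ furnished by Lemma~\ref{LeRegion}(4), whose internal vertices are all interior to $A$. If $\varPi[uv]$ has length $2$, the unique internal vertex is a common neighbor $p_1$ of $u$ and $v$, and its BFS-layer determines the extraction of a claw: the case $p_1\in N_{i-1}$ falls into Case~1, while $p_1\in N_i\cup N_{i+1}$ is handled by the auxiliary fact---provable by the same back-neighbor claw argument---that $N(w)\cap N_i$ is a clique for every $w\in N_{i-1}$. If $\varPi[uv]$ has length $\ge 3$, I would iterate the Case~1/Case~2 dichotomy along successive internal vertices of $\varPi[uv]$, each step either producing a claw or further constraining the shared $N_{i-2}$-neighborhood structure until a contradiction is forced. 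The delicacy is that the residual configuration is claw-free in isolation, so the contradiction must be extracted from the interplay between $\varPi[uv]$ and the global BFS-layer structure.
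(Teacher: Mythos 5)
Your base case and the first half of your inductive step coincide with the paper's: the common-neighbor case gives a claw $\{x;w,y,y'\}$, and in the disjoint case you correctly derive $xx'\in E(G)$ from the induction hypothesis and $wx'\in E(G)$ from claw-freeness (your $u_1,v_1,u_2$ are the paper's $x,x',w$). But at exactly the point you call ``the technical heart,'' your argument stops, and the continuation you sketch cannot succeed. You are trying to force a contradiction from claw-freeness and the layer structure alone, and, as you yourself observe, the residual configuration is claw-free in isolation --- so no amount of iterating the claw-extraction dichotomy along $\varPi[uv]$ will close the argument. The statement is simply not a structural consequence of claw-freeness: the degree hypothesis must enter.

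The missing idea is that the configuration you have built is an induced copy of $N$: taking $v\in N_{i-3}$ adjacent to $w$, the set $\{w,v,x,y,x',y'\}$ induces a net with triangle $wxx'$ and pendant vertices $v$, $y$, $y'$ (all the required non-adjacencies follow from the layer distances and the non-edges already established). The hypothesis $\varPhi(N,3)$ then makes $v$, $y$ and $y'$ hefty. The contradiction is obtained not from a claw but from the counting in Claim~\ref{ClHefty}: using the two hefty vertices $b,b'\in B$ supplied by Claim~\ref{ClNAB}, Lemma~\ref{LeRegion1}~(4) gives $b\not\sim v$ or $b'\not\sim v$ (say $b\not\sim v$) and likewise $b\not\sim y$ or $b\not\sim y'$ (say $b\not\sim y$); since $v\in N_{i-3}$ and $y\in N_i$ have no common neighbor, Claim~\ref{ClHefty}~(1) forbids $b,v,y$ from all being hefty. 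Your proposal never invokes $\varPhi(N,3)$, never uses the hefty vertices of $B$, and never performs a degree count, so it omits the entire second half of the proof rather than merely a technical detail.
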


\begin{proof}
We use induction on $i$. By Lemma \ref{LeRegion} (1), $N_1$ is a
clique. Now we assume that $2\leq i\leq j$. Note that
$N_{i-1},N_{i-2}$ and $N_{i-3}$ are nonempty.

Assume that there are two vertices $y,y'$ in $N_i$ with $yy'\notin
E(G)$. If $y$ and $y'$ have a common neighbor in $N_{i-1}$, then let
$x$ be a common neighbor of $y$ and $y'$ in $N_{i-1}$, and $w$ be a
neighbor of $x$ in $N_{i-2}$. Then the subgraph induced by
$\{x,w,y,y'\}$ is a claw, a contradiction. This implies that $y$ and
$y'$ have no common neighbors in $N_{i-1}$. Now let $x$ be a
neighbor of $y$ in $N_{i-1}$ and $x'$ be a neighbor of $y'$ in
$N_{i-1}$. Note that $xy',x'y\notin E(G)$. Let $w$ be a neighbor of
$x$ in $N_{i-2}$ and let $v$ be a neighbor of $w$ in $N_{i-3}$. By
the induction hypothesis, $xx'\in E(G)$. If $wx'\notin E(G)$, then
the subgraph induced by $\{x,w,x',y\}$ is a claw, a contradiction.
This implies that $wx'\in E(G)$. Now the subgraph induced by
$\{w,v,x,y,x',y'\}$ is an $N$. Thus the three vertices $v,y$ and
$y'$ are all hefty.

By Lemma \ref{LeRegion1} (4), $v\not\sim b$ or $v\not\sim b'$. We assume
without loss of generality that $v\not\sim b$. Similarly $b\not\sim y$ or
$b\not\sim y'$, we assume without loss of generality that $b\not\sim y$.
Note that $b,v,y$ are all hefty, $b\not\sim v$, $b\not\sim y$ and $v,y$
have no common neighbors. We get a contradiction. {\hfill$\Box$}
\end{proof}

If both $a_2$ and $a_3$ are in $N_j$, then let $w$ be a neighbor of
$a_2$ in $N_{j-1}$, $v$ be a neighbor of $w$ in $N_{j-2}$. By Claim
3 and Lemma \ref{LeRegion} (1), $a_2a_3,wa_3\in E(G)$. Thus the
subgraph induced by $\{w,v,a_2,a'_2,a_3,a'_3\}$ is an $N$. Thus
$v,a'_2$ and $a'_3$ are three hefty vertices. Note that $v,a'_2$ and
$a'_3$ are pairwise dissociated, a contradiction. So we assume
without loss of generality that $a_2\notin N_j$.

Let $a_2\in N_i$, where $1\leq i\leq j-1$. Let $y$ be a vertex in
$N_{i+1}$. Recall that $a_2$ has no neighbors in $N_{i+1}$. Let $x$
be a neighbor of $y$ in $N_i$, $w$ be a neighbor of $a_2$ in
$N_{i-1}$ and $v$ be a neighbor of $w$ in $N_{i-2}$. By Claim
\ref{ClNClique} and Lemma \ref{LeRegion} (1), $a_2x,wx\in E(G)$, and
the subgraph induced by $\{w,v,x,y,a_2,a'_2\}$ is an $N$. Thus $v,y$
and $a'_2$ are three hefty vertices. Note that $a'_2\not\sim v$,
$a'_2\not\sim y$, and $v,y$ have no common neighbors, a contradiction.

\subsection*{The case $H=W$}

\begin{claim}\label{ClWNoedge}
For $i,j$, $1\leq i<j\leq 3$, one of the edges in
$\{a_ia_j,b_ib_j,a_ib_i,a_jb_j\}$ is not in $E(G)$.
\end{claim}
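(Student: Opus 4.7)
I plan to prove this claim by contradiction. Suppose that for some $i<j$, all four edges $a_ia_j$, $b_ib_j$, $a_ib_i$, $a_jb_j$ belong to $E(G)$. Because $a_ib_i\in E(G)\subseteq E(\cl(G))$ and $P_{x_1,x_2,x_3}$ is induced in $\cl(G)$, the edge $a_ib_i$ must appear in $P_{x_1,x_2,x_3}$; by the definition of the class $\mathcal{P}$, this forces $x_i=T$, and symmetrically $x_j=T$. Let $R_i$ and $R_j$ be the regions of $G$ containing $\{a_i,b_i,c_i\}$ and $\{a_j,b_j,c_j\}$, respectively. Since $N_{R_i}(a_i)$ is a clique by Lemma~\ref{LeRegion}(1) and contains both $a'_i$ (by definition of $a'_i$) and $b_i$ (by the assumption $a_ib_i\in E(G)$), we get $a'_ib_i\in E(G)$, so $\{a_i,b_i,a'_i\}$ is a triangle in $G$; the analogous argument shows that $\{a_j,b_j,b'_j\}$ is a triangle in $G$.

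Let $k$ be the third index in $\{1,2,3\}\setminus\{i,j\}$. The heart of the argument is to build two induced copies of $W$ in $G$. The first copy $W^{1}$ uses $\{a_i,b_i,a'_i\}$ as its triangle, $a_j$ as the pendant attached at $a_i$, and the length-$2$ path $b_i v_5 v_6$ attached at $b_i$; concretely, take $(v_5,v_6)=(b_k,b'_k)$ when $b_ib_k\in E(G)$, and otherwise let $v_5$ be the successor of $b_i$ in $\varPi[b_ib_k]$ and $v_6$ the next vertex on that path. In either sub-case the required non-edges follow from Lemma~\ref{LeRegion1}(2): the triangle vertices $a'_i,a_i,b_i$ and the pendant $a_j$ lie in regions $A,R_i,R_j$, whereas $v_5,v_6$ lie in $B\cup R_k$, and these region families are pairwise disjoint apart from frontier vertices not among the six chosen. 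Hence $W^{1}$ is induced in $G$, and its end-vertices $a_j$ and $v_6$ are both hefty. The symmetric construction (swap $a\leftrightarrow b$ and $i\leftrightarrow j$) yields a second induced copy $W^{2}$ with triangle $\{a_j,b_j,b'_j\}$, pendant $b_i$, and length-$2$ path along $\varPi[a_ja_k]$ in $A$, whose end-vertices $b_i$ and some $w_6\in A\cup R_k$ are hefty as well.

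To finish, I will pick a triple of pairwise dissociated hefty vertices and invoke Claim~\ref{ClHefty}(2). In the generic case, when $b_ib_k\in E(G)$ and $a_ja_k\in E(G)$, the triple $\{a_j,b_i,v_6\}$ has members lying in the pairwise-disjoint regions $A\cup R_j$, $B\cup R_i$ and $R_k$, so it is pairwise dissociated and Claim~\ref{ClHefty}(2) yields the contradiction immediately. The main obstacle I foresee lies in the degenerate sub-cases where $b_ib_k\notin E(G)$ (so $v_6$ lands in $B$ and becomes associated with $b_i$) or $a_ja_k\notin E(G)$ (so $w_6$ lands in $A$ and is associated with $a_j$). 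These will be handled by switching to the alternative triple $\{a_j,v_6,w_6\}$ or, if necessary, by producing a further induced copy of $W$ through the $k$-th branch to obtain an extra hefty vertex in $R_k$; in every sub-case one can extract a pairwise dissociated hefty triple, and Claim~\ref{ClHefty}(2) then yields the desired contradiction.
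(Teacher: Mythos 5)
Your construction of $W^{1}$ is sound (the deduction $a'_ib_i,a'_jb_j\in E(G)$ via Lemma~\ref{LeRegion}(1) is exactly right, and the dissociation checks for its six vertices go through), but the endgame has a genuine gap in the sub-case where both $b_ib_k\notin E(G)$ and $a_ja_k\notin E(G)$. There your four hefty vertices are $a_j$ (in $A$ and $R_j$), $b_i$ (in $B$ and $R_i$), $v_6\in B$ and $w_6\in A$, and no three of them are pairwise dissociated: $v_6\sim b_i$, $w_6\sim a_j$, the fallback triple $\{a_j,v_6,w_6\}$ fails because $a_j\sim w_6$, and the pair $\{v_6,w_6\}$ cannot be completed since the only remaining hefty vertices lie in $A$ and $B$ (moreover $v_6$ and $w_6$ may themselves be $b_k$ and $a_k$, hence associated in $R_k$). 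Your other fallback, ``a further induced $W$ through the $k$-th branch,'' is blocked by exactly the missing edges $a_ja_k$ and $b_ib_k$: every available triangle sits in $R_i$, $R_j$, $A$ or $B$, and without one of those edges no pendant or length-two path from such a triangle reaches a vertex of $R_k$ outside $A\cup B$. Claim~\ref{ClHefty}(1) does not rescue the situation either, since $b_i,v_6$ (respectively $a_j,w_6$) lie in a common region and may have many common neighbours.

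The paper sidesteps this case distinction by routing the length-two path of the $W$ \emph{across} the triangle edge $b_jb_i$ and then into $R_i$ via the edge $b_ia'_i$: the subgraph induced by $\{a'_j,a_j,a,b_j,b_i,a'_i\}$, where $a$ is the successor of $a_j$ on $\varPi[a_ja_k]$, is a $W$ with end-vertices $a$ and $a'_i$, and the copy with $i$ and $j$ exchanged makes $a'_j$ hefty as well; the three hefty vertices $a\in A$, $a'_i\in R_i$, $a'_j\in R_j$ are then pairwise dissociated \emph{unconditionally}, and Claim~\ref{ClHefty}(2) finishes. If you replace the path $b_iv_5v_6$ of your $W^{1}$ by $a_ia_ja'_j$ attached at $a_i$ (with the pendant at $b_i$ taken to be the successor of $b_i$ on $\varPi[b_ib_k]$), i.e.\ adopt the mirror of the paper's construction, your argument closes in every sub-case without any analysis of which of the edges $a_ja_k$, $b_ib_k$ are present.
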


\begin{proof}
We assume that $a_ia_j,b_ib_j,a_ib_i,a_jb_j\in E(G)$. By Lemma
\ref{LeRegion} (1), $a'_ib_i,a'_jb_j\in E(G)$. Let $a$ be the
successor of $a_j$ in the path $\varPi[a_ja_k]$, where $k\neq i,j$.
Then the subgraph induced by $\{a'_j,a_j,a,b_j,b_i,a'_i\}$ is a $W$.
Thus $a,a'_i$, and similarly $a'_j$, are hefty. Note that $a,a'_i$
and $a'_j$ are pairwise dissociated, a contradiction. {\hfill$\Box$}
\end{proof}

As in the case of $N$, we set $$N_i=\{v\in V(A): d_A(a_1,v)=i\}
\mbox{ and } j=\max\{i: N_i\neq\emptyset\}.$$ Note that
$N_0=\{a_1\}$, $N_1=N_{A}(a_1)$ and we define additionally
$N_{-1}=\{a'_1\}$.

\begin{claim}\label{ClWAB}
There is a hefty vertex in $A\backslash\{a_1,a_2,a_3,c\}$ (and
similarly, in $B\backslash\{b_1,b_2,b_3,c\}$).
\end{claim}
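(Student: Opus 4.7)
The plan is to prove the claim by constructing, in $G$, an induced copy of $W$ whose end-vertex lies in $V(A)\setminus\{a_1,a_2,a_3,c\}$; by the hypothesis $\varPhi(W,3)$ this end-vertex is then hefty, yielding the claim. Separately, one handles the degenerate case where $V(A)\setminus\{a_1,a_2,a_3,c\}$ is empty. The assertion for $B$ follows by symmetry.

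For the degenerate case $V(A)\subseteq\{a_1,a_2,a_3,c\}$: by Lemma \ref{LeRegion} (3) the region $A$ is complete in $G$ (its only vertices are frontier vertices, so the alternative in that lemma forces completeness), hence $\{a_1,a_2,a_3\}$ is a triangle of $G$. I would then build three induced copies of $W$ in $G$ by successively placing each $a'_j$ ($j=1,2,3$) in the simple-pendant role at $a_j$: the triangle of each $W$ is $\{a_1,a_2,a_3\}$, and the length-two pendant is supplied by $\varPi[P^i]$ for some $i\ne j$ with $x_i=k_i\geq 3$, or by the triangle $\{a_i,b_i,c_i\}\subseteq R^i$ when $x_i=T$. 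This makes all three of $a'_1,a'_2,a'_3$ hefty. Since the $a'_j$ lie in the pairwise-distinct regions $R^j$, they are pairwise dissociated, and Claim \ref{ClHefty} (2) yields a contradiction that rules out this degenerate case.

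In the main case $V(A)\setminus\{a_1,a_2,a_3,c\}\ne\emptyset$, the region $A$ contains a triangle in $G$ (every region with at least three vertices does, since a triangle-free graph has no eligible vertex and hence equals its own closure, which cannot be a larger clique). Using such a triangle as the triangle of the proposed $W$, a simple pendant $a'_j\in R^j$ attached at a triangle-vertex, and a length-two pendant path $a_i-m-u$ with $m\in V(A)$ and $u$ located in a deep BFS-layer $N_k$ ($k\geq 2$) of the layering $N_0,N_1,\ldots$ of $A$ rooted at $a_1$, I aim to produce an induced $W$ with $u\in V(A)\setminus\{a_1,a_2,a_3,c\}$ as an end-vertex. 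Lemma \ref{LeRegion1} (2) handles non-adjacencies involving $a'_j$ (since $A\cap R^j=\{a_j\}$, $a'_j$ has no $G$-neighbors in $A\setminus\{a_j\}$), and Lemma \ref{LeRegion} (1) (which forces $N_1=N_A(a_1)$ to be a clique) together with the BFS structure handles non-adjacencies within $A$. The main technical obstacle I expect is locating the triangle and middle vertex $m$ with the correct adjacency pattern, since $N_1$ being a clique containing $a_2,a_3$ forces every vertex of $N_1$ to be adjacent to both $a_2$ and $a_3$ in $G$; this, together with the three possible $x_i$-types and the possibility $A\cap B=\{c\}$, calls for careful case analysis.
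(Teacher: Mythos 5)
Your strategy---directly exhibiting, in every non-degenerate configuration, an induced $W$ whose end-vertex lies in $A\backslash\{a_1,a_2,a_3,c\}$---is not the route the paper takes, and as described it cannot be completed. The paper's proof is a reductio: it \emph{assumes} no vertex of $A\backslash\{a_1,a_2,a_3,c\}$ is hefty, uses that assumption to prove inductively that every BFS layer $N_i$ of $A$ rooted at $a_1$ is a clique (Claim \ref{ClWABClique}), and then in the residual configurations produces induced $W$'s whose hefty end-vertices lie \emph{outside} $A\backslash\{a_1,a_2,a_3,c\}$ (e.g.\ $a'_2$, $a'_3$, a vertex $v$ that may equal $a_1$ or $a'_1$ via the convention $N_{-1}=\{a'_1\}$, or a vertex $z$ on the $B$-side), deriving the contradiction from Claim \ref{ClHefty} because those hefty vertices are pairwise dissociated or have too few common neighbours. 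Your plan has neither ingredient, and two things break concretely. First, if $A$ is a complete region with interior vertices, then $j=1$ and every ``deep layer'' $N_k$ with $k\geq 2$ is empty, so your proposed induced pendant path $a_i$--$m$--$u$ inside $A$ does not exist; the same happens when the deepest layers contain only $a_2,a_3,c$. These are exactly the situations where the claim cannot be proved by exhibiting a $W$ ending in $A\backslash\{a_1,a_2,a_3,c\}$ and where the paper instead shows the configuration is impossible. Second, even when a deep layer exists, checking that your six vertices induce a $W$ requires knowing the layers $N_i$ are cliques (otherwise $m$, $u$ and the triangle vertices have uncontrolled adjacencies); in the paper this is precisely the subclaim whose proof \emph{uses} the contradiction hypothesis (a non-adjacent pair deep in $A$ and away from $a_1,a_2,a_3,c$ yields a $W$ ending at a vertex assumed not to be hefty). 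You offer no substitute for it, and you acknowledge the case analysis is not carried out.

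Your degenerate case $V(A)\subseteq\{a_1,a_2,a_3,c\}$ is essentially sound and parallels the paper's subcase ``$a_2,a_3\in N_j$'' with $j=1$ (complete $A$, $w=a_1$, $v=a'_1$), though you still owe the induced-ness verifications for the three $W$'s, in particular that the first two vertices of the length-two pendant are non-adjacent to the remaining triangle vertices and to $a'_j$. But the main case is a plan rather than a proof, and the plan omits the reductio mechanism on which the actual argument depends.
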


\begin{proof}
We assume on the contrary that there are no hefty vertices in
$A\backslash\{a_1,a_2,a_3,c\}$.

\begin{subclaim}\label{ClWABClique}
  $N_i$ is a clique for all $1\leq i\leq j$.
\end{subclaim}

\begin{proof}
We use induction on $i$. By Lemma \ref{LeRegion} (1), $N_1$ is a
clique. Now we assume that $2\leq i\leq j$. Note that
$N_{i-1},N_{i-2}$ and $N_{i-3}$ are nonempty.

Assume that there are two vertices $y,y'$ in $N_i$ with $yy'\notin
E(G)$. Note that $y$ and $y'$ have no common neighbors in $N_{i-1}$.
Let $x$ be a neighbor of $y$ in $N_{i-1}$, $x'$ be a neighbor of
$y'$ in $N_{i-1}$, $w$ be a neighbor of $x$ in $N_{i-2}$ and $v$ be
a neighbor of $w$ in $N_{i-3}$. By the induction hypothesis, $xx'\in
E(G)$. Note that $wx'\in E(G)$; otherwise the subgraph induced by
$\{x,w,x',y\}$ is a claw.

If $y=a_2$, then the subgraph induced by $\{x',w,v,x,a_2,a'_2\}$ and
the subgraph induced by $\{w,x',y',x,a_2,a'_2\}$ are $W$'s. Thus
$v,y'$ and $a'_2$ are three hefty vertices. Note that $a'_2\not\sim v$
$a'_2\not\sim y'$, and $v,y'$ have no common neighbors, a
contradiction. So we assume that $y\neq a_2$, and similarly, $y\neq
a_3$, $y'\neq a_2$, $y'\neq a_3$. This implies that either $y$ or
$y'$ is in $A\backslash\{a_1,a_2,a_3,c\}$.

We assume without loss of generality that $y\in
A\backslash\{a_1,a_2,a_3,c\}$. Let $P'$ be a shortest path from $w$
to $a_1$ (note that $P'$ consists of the vertex $a_1$ if $w=a_1$).
Let $w,v$ and $u$ be the first three vertices in the path
$P=P'a_1\varPi[P^1]b_1\varPi[b_1b_2]$. Then the subgraph induced by
$\{x',x,y,w,v,u\}$ is a $W$. Thus $y$ is a hefty vertex, a
contradiction. {\hfill$\Box$}
\end{proof}

If both $a_2$ and $a_3$ are in $N_j$, then let $w$ be a neighbor of
$a_2$ in $N_{j-1}$, $v$ be a neighbor of $w$ in $N_{j-2}$. By Claim
\ref{ClWABClique} and Lemma \ref{LeRegion} (1), $a_2a_3,wa_3\in
E(G)$. Let $a_2,y$ and $z$ be the first three vertices in the path
$P=\varPi[P^2]b_2\varPi[b_2b_3]$. By Claim \ref{ClWNoedge},
$a_3z\notin E(G)$. Then the subgraph induced by
$\{a_3,w,v,a_2,y,z\}$ is a $W$. Let $a_3,y',z'$ be the first three
vertices in the path $P=\varPi[P^2]b_2\varPi[b_2b_1]$. By Claim
\ref{ClWNoedge}, $wz'\notin E(G)$. Then the subgraph induced by
$\{w,a_2,a'_2,a_3,y',z'\}$ is a $W$. Thus $v,a'_2$, and similarly,
$a'_3$, are hefty. Note that $v,a'_2$ and $a'_3$ are pairwise
dissociated, a contradiction. So we assume without loss of
generality that $a_2\notin N_j$.

Let $a_2\in N_i$, where $1\leq i\leq j-1$. Let $y$ be a vertex in
$N_{i+1}$. Recall that $a_2$ has no neighbors in $N_{i+1}$. Let $x$
be a neighbor of $y$ in $N_i$, $w$ be a neighbor of $a_2$ in
$N_{i-1}$ and $v$ be a neighbor of $w$ in $N_{i-2}$. Note that
$a_2x,wx\in E(G)$.

If $y=a_3$, then let $z=a'_3$; and if $y=c$, then let $z$ be the
successor of $c$ in $\varPi[cb_3]$. Then the subgraph induced by
$\{a_2,w,v,x,y,z\}$ and the subgraph induced by
$\{w,a_2,a'_2,x,y,z\}$ are $W$'s. Thus $v,a'_2$ and $z$ are hefty.
Note that $v,a'_2$ and $z$ are pairwise dissociated, a
contradiction. Now we assume that $y\neq c,a_3$. Let $a_2,y',z'$ be
the first three vertices in the path
$P=\varPi[P^2]b_2\varPi[b_2b_3]$. Then the subgraph induced by
$\{w,x,y,a_2,y',z'\}$ is a $W$. This implies that $y$ is hefty, a
contradiction. {\hfill$\Box$}
\end{proof}

Now let $a$ and $b$ be two hefty vertices in
$A\backslash\{a_1,a_2,a_3,c\}$ and $B\backslash\{b_1,b_2,b_3,c\}$,
respectively. Since $a,b$ and $a'_i$ are pairwise dissociated,
$a'_i$ is not hefty.

By Lemma \ref{LeRegion} (3), $a_1$ has an interior neighbor in $A$
or $a_1a\in E(G)$. In any case, $a_1$ has a neighbor in
$A\backslash\{a_2,a_3,c\}$. If $a_1a_2\in E(G)$, then let $v$ be a
neighbor of $a_1$ in $A\backslash\{a_2,a_3,c\}$. By Lemma
\ref{LeRegion}, $a_2v\in E(G)$. Let $a_2,x$ and $y$ be the first
three vertices in the path $P=\varPi[P^2]b_2\varPi[b_2b_3]$. Then
the subgraph induced by $\{v,a_1,a'_1,a_2,x,y\}$ is a $W$. Thus
$a'_1$ is hefty, a contradiction. This implies that $a_1a_2$, and
similarly, $a_1a_3,a_2a_3$, is not in $E(G)$.

\begin{claim}\label{ClWClique}
  $N_i$ is a clique for all $1\leq i\leq j$.
\end{claim}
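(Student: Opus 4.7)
The plan is to prove the claim by induction on $i$, following the template of Claim~\ref{ClNClique} and Subclaim~\ref{ClWABClique}. The base case $i = 1$ is immediate from Lemma~\ref{LeRegion}(1): since $a_1$ is a frontier vertex of $A$, the set $N_1 \subseteq N_A(a_1)$ is a clique. For the inductive step, fix $i \geq 2$ with $N_k$ a clique for each $1 \leq k < i$, and suppose for contradiction that $y, y' \in N_i$ with $yy' \notin E(G)$. As in Claim~\ref{ClNClique}, the vertices $y$ and $y'$ share no neighbor in $N_{i-1}$, so I pick $x \in N_{i-1}$ adjacent to $y$, $x' \in N_{i-1}$ adjacent to $y'$, $w \in N_{i-2}$ adjacent to $x$, and $v \in N_{i-3}$ adjacent to $w$ (using $N_{-1} = \{a'_1\}$); the induction hypothesis and claw-freeness force $xx', wx' \in E(G)$, making $\{w, x, x'\}$ a triangle and $\{w, v, x, y, x', y'\}$ an induced $N$.

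Since the hypothesis is $\varPhi(W, 3)$ rather than $\varPhi(N, 3)$, the strategy is to upgrade the $N$ to an induced $W$ by installing a length-two pendant path on the triangle $\{w, x, x'\}$, and to exploit the non-heftiness of $a'_1, a'_2, a'_3$ established just before the claim. If $y = a_2$ (or symmetrically $a_3$, or with $y$ replaced by $y'$), then $\{x', w, v, x, a_2, a'_2\}$ is an induced $W$ with end-vertices $v$ and $a'_2$, and the non-heftiness of $a'_2$ yields the contradiction. When $y$ or $y'$ equals the common vertex $c$ (if $c$ exists), the successor $z$ of $c$ along a suitable $\varPi[cb_k]$ takes the role of $a'_2$, and the pair of induced $W$'s $\{x', w, v, x, c, z\}$ and $\{w, x', y', x, c, z\}$ makes $v, y', z$ all hefty; Claim~\ref{ClHefty}(1) applied to these three, whose pairwise dissociation and empty common-neighbor conditions follow from BFS structure and region membership, yields the contradiction.

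In the residual subcase $y, y' \notin \{a_1, a_2, a_3, c\}$, the length-two pendant path must be installed on $w$: take a shortest $(w, a_1)$-path $P'$ inside $A$, prolong it through $a_1$ into $\varPi[P^1]$, and let $v_0, u$ be the next two vertices after $w$ along $P' a_1 \varPi[P^1]$; then $\{x', x, y, w, v_0, u\}$ is an induced $W$ with end-vertices $y$ and $u$, making $y$ hefty, and a symmetric construction makes $y'$ hefty. For $i = 3$ the vertex $u$ equals $a'_1$, so the induced $W$ already has $a'_1$ as an end-vertex and the non-heftiness of $a'_1$ gives the contradiction; for $i \geq 4$, $u \in A$ and Claim~\ref{ClHefty}(1) applied to $b, y, u$ delivers the contradiction, since $y \in N_i$ and $u \in N_{i-4}$ lie in disjoint BFS shells inside $A$ and $b \in B \setminus \{c\}$ is dissociated from both; for $i = 2$, the construction uses either a neighbor $y_1 \in N_3$ of $y$ to place $a'_1$ as a pendant on $w = a_1$ in a $W$, or descends further along $\varPi[P^1]$ to reach a vertex $a''_1$ for Claim~\ref{ClHefty}(1) with $b$ and $y$. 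The main obstacle is bookkeeping: each candidate six-vertex set must be verified to induce the desired $W$, which relies on Lemma~\ref{LeRegion1}(1)--(2) to rule out cross-region edges and on BFS-distances to rule out intra-$A$ edges that would collapse the pendant pattern. A secondary obstacle is handling small cases where $\varPi[P^1]$ is short (for instance $x_1 = T$ with no interior vertex in $\varPi[a_1 b_1]$), where the pendant-path extension must be obtained along a different $\varPi[P^k]$ or inside $A$ itself.
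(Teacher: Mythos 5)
Your base case and the reduction to an induced $N$ on $\{w,v,x,y,x',y'\}$ (triangle $w,x,x'$ at levels $i-2,i-1,i-1$ with pendants $v,y,y'$) are fine, and the subcases $y\in\{a_2,a_3\}$ and $y=c$ are essentially correct. The genuine gap is in the residual subcase $y,y'\notin\{a_1,a_2,a_3,c\}$, and it is most acute at $i=2$. There the length-two pendant must be installed at $w=a_1$, and the only way out of $A$ from $a_1$ is into the single other region containing $a_1$, i.e.\ along $\varPi[P^1]$. When that path is short (e.g.\ $x_1=T$ with $a_1b_1\in E(G)$, so that $\varPi[P^1]$ is just the edge $a_1b_1$), the pendant path becomes $a_1$--$b_1$--$u$ with $u\in B$: the middle vertex is not $a'_1$, the far end-vertex $u$ is not one of the non-hefty vertices $a'_1,a'_2,a'_3$, and Claim~\ref{ClHefty}(1) cannot be applied to $\{b,y,u\}$ because $b\sim u$ (both lie in $B$); substituting $a$ for $b$ fails too, since $a\sim y$. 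Your fallback ``a neighbour $y_1\in N_3$ of $y$'' is also unavailable when $j=2$ or when $y$ has no neighbour in $N_3$, and ``along a different $\varPi[P^k]$ or inside $A$ itself'' does not help: a pendant path at $a_1$ must pass through a neighbour of $a_1$, which lies either in $N_1$ (hence adjacent to $x,x'$) or in $a_1$'s unique second region. So the $i=2$ step, and the small-$i$ residual cases generally, are not closed.

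The paper avoids all of this by choosing a different configuration. For $3\le i\le j$ it places the triangle one level higher, on $y,y'\in N_{i-1}$ and $x\in N_{i-2}$, with the short pendant $z\in N_i$ at $y$ and the long pendant $x$--$w$--$v$ descending to $v\in N_{i-4}$ (using $N_{-1}=\{a'_1\}$); the resulting $W$ lives entirely in $A\cup\{a'_1\}$, needs no case analysis on $a_2,a_3,c$ and no excursion along $\varPi[P^1]$, and its end-vertices $z\in N_i$ and $v\in N_{i-4}$ are finished off by Claim~\ref{ClHefty}(1) together with $b$ (for $i=3$ one gets $v=a'_1$, so the non-heftiness of $a'_1$ already contradicts $\varPhi(W,3)$). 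For $i=2$ the paper runs a separate two-stage argument: it first fixes $z\in\{a'_2\}\cup N_3$ and a neighbour $y$ of $z$ in $N_2$, shows this particular $y$ is adjacent to all of $N_2$ via the $W$ induced by $\{x',a_1,a'_1,x,y,z\}$ (whose end-vertex $a'_1$ is not hefty), and then finishes with the claw $\{y,y',y'',z\}$. To repair your proof you would need to import at least the paper's $i=2$ argument, and for $i\ge3$ either switch to the paper's configuration or supply an actual fix for the short-$\varPi[P^1]$ obstruction you yourself flagged.
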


\begin{proof}
We use induction on $i$. By Lemma \ref{LeRegion} (1), $N_1$ is a
clique.

We first consider the case $i=2$. Recall that $a_1a_2\notin E(G)$,
which implies that $a_2\notin N_1$. If $a_2\in N_2$, then let
$z=a'_2,y=a_2$; and if $a_2\notin N_2$, then ($j\geq 3$ and) let $z$
be a vertex in $N_3$, and $y$ be a neighbor of $z$ in $N_2$.

We claim that $y$ is adjacent to every vertex in
$N_2\backslash\{y\}$. Assume that $yy'\notin E(G)$ for $y'\in
N_2\backslash\{y\}$. Then $y$ and $y'$ have no common neighbors in
$N_1$. Let $x$ be a neighbor of $y$ in $N_1$ and $x'$ be a neighbor
of $y'$ in $N_1$. Then $xy',x'y\notin E(G)$. Since $xx'\in E(G)$,
the subgraph induced by $\{x',a_1,a'_1,x,y,z\}$ is a $W$, and this
implies that $a'_1$ is hefty, a contradiction. Thus, as we claimed,
$y$ is adjacent to every vertex in $N_2\backslash\{y\}$. Now let
$y',y''$ be two vertices in $N_2\backslash\{y\}$. We claim that
$y'y''\in E(G)$. If $y'z\in E(G)$, then ($z\neq a'_2$ and) similarly
as the case of $y$, we can see that $y'$ is adjacent to every vertex
in $N_2\backslash\{y'\}$, including $y''$. So we assume that $y'z$,
and similarly, $y''z$, is not in $E(G)$. Then the subgraph induced
by $\{y,y',y'',z\}$ is a claw, a contradiction. Thus, as we claimed,
$N_2$ is a clique.

Now we assume that $3\leq i\leq j$. Note that
$N_{i-1},N_{i-2},N_{i-3}$ and $N_{i-4}$ are nonempty.

Assume that there are two vertices $z$ and $z'$ in $N_i$ with
$zz'\notin E(G)$. Note that $z$ and $z'$ have no common neighbors in
$N_{i-1}$. Let $y$ be a neighbor of $z$ in $N_{i-1}$ and $y'$ be a
neighbor of $z'$ in $N_{i-1}$. Then $yz',y'z\notin E(G)$. Let $x$ be
a neighbor of $y$ in $N_{i-2}$, $w$ be a neighbor of $x$ in
$N_{i-3}$ and $v$ be a neighbor of $w$ in $N_{i-4}$. Then
$yy',xy'\in E(G)$. Now the subgraph induced by $\{y',y,z,x,w,v\}$ is
a $W$. Thus $v$ and $z$ are hefty. Note that $b\not\sim v$, $b\not\sim z$,
and $v,z$ have no common neighbors, a contradiction. {\hfill$\Box$}
\end{proof}

Recall that $a_2a_3\notin E(G)$, which implies that either $a_2$ or
$a_3\notin N_j$. Also recall that $a_2,a_3\notin N_1$. We assume
without loss of generality that $a_2\in N_i$, where $2\leq i\leq
j-1$. Let $z$ be a vertex in $N_{i+1}$, $y$ be a neighbor of $z$ in
$N_i$, $x$ be a neighbor of $a_2$ in $N_{i-1}$, $w$ be a neighbor of
$x$ in $N_{i-2}$ and $v$ be a neighbor of $w$ in $N_{i-3}$. By Claim
\ref{ClWClique} and Lemma \ref{LeRegion} (1), $a_2y,xy\in E(G)$.
Then the subgraph induced by $\{y,a_2,a'_2,x,w,v\}$ is a $W$. This
implies that $a'_2$ is hefty, a contradiction.

The proof is complete. {\hfill$\Box$}

\end{document}